\documentclass[2p]{article}
\usepackage{amssymb,amsmath,amsthm}
\usepackage{indentfirst}
\usepackage{exscale}
\usepackage{relsize}
\usepackage{color}
\usepackage{geometry}

\newtheorem{theorem}{Theorem}[section]

\theoremstyle{definition}
\newtheorem{definition}[theorem]{Definition}

\newtheorem{lemma}[theorem]{Lemma}

\UseRawInputEncoding
\theoremstyle{remark}
\newtheorem{remark}[theorem]{Remark}

\numberwithin{equation}{section}
\begin{document}

\title{Three solutions for a quasilinear inclusion systems driven by a nonstandard Laplacian operator in $\mathbb{R}^N$ }
\date{}
\author {\ Cuiling Liu$^{1,3}$,
\ Xingyong Zhang$^{1,2}$\footnote{Corresponding author, E-mail address: zhangxingyong1@163.com} \\
      {\footnotesize $^{1}$Faculty of Science, Kunming University of Science and Technology, Kunming, Yunnan, 650500, P.R. China.}\\
      {\footnotesize $^{1,2}$Research Center for Mathematics and Interdisciplinary Sciences, Kunming University of Science and Technology,}\\
 {\footnotesize Kunming, Yunnan, 650500, P.R. China.}\\
      {\footnotesize $^{3}$Key Laboratory of Interdisciplinary Applications in Mathematics and Intelligent Engineering,}\\
      {\footnotesize Yunnan Education Department, China, 650500.}\\
 }
 \date{}
 \maketitle

 \begin{center}
 \begin{minipage}{15cm}
 \small  {\bf Abstract:}
 This paper establishes the existence of three distinct weak solutions for a class of nonhomogeneous quasilinear inclusion systems, which are governed by locally Lipschitz functionals within Orlicz-Sobolev spaces over unbounded domains  $\mathbb{R}^{N}$.
 By employing a new three critical points theorem established by Wu-Zhou in \cite{WuXian2021}, we extend the nonsmooth critical point theory to handle scenarios with nonlinear growth conditions.
 Our analysis imposes a precise structural condition on the nonlinear term $H$, requiring it to exhibit superlinear growth while maintaining subcritical asymptotic behavior. This condition establishes a delicate balance between growth restrictions and functional analytic requirements.
The novelties of this work include the construction of suitable energy functionals that are consistent with both the structure of Orlicz-Sobolev spaces and the characteristics of nonlinear growth. Through rigorous variational analysis and careful estimation techniques, we establish the existence of three weak solutions under these generalized conditions, thereby significantly expanding the applicability of critical point methods in nonstandard function spaces.

 \par
 {\bf Keywords:}
Orlicz-Sobolev spaces; nonsmooth three critical points theorem; nonhomogeneous quasilinear inclusion systems.
\par
 {\bf 2010 Mathematics Subject Classification.} 35J20; 35J50; 35J55.
\end{minipage}
 \end{center}
  \allowdisplaybreaks
 \vskip2mm

\section{Introduction}
Partial differential equations with discontinuous nonlinearities represent a class of non-smooth dynamical models with significant applications in mathematical physics, engineering optimization, and materials science. These equations are characterized by nonlinear terms exhibiting abrupt variations or piecewise-defined features, such as friction models in contact mechanics, free boundary conditions in phase transition problems, or switching-type constraints in control theory. Their rigorous mathematical treatment requires the development of advanced analytical tools that transcend the traditional smoothness theories governing classical PDE analysis.

Within this research framework, we present an existence theorem concerning three critical points for locally Lipschitz functionals within Orlicz-Sobolev spaces. This work particularly addresses  the multiplicity of solutions for a class of quasilinear elliptic systems governed by nonhomogeneous differential operators. The prototypical problem under investigation takes the following variational model:
\begin{equation}\label{eq5-0}
 \left\{
  \begin{array}{ll}
 -\mbox{div}(\phi_1(|\nabla{u}|)\nabla{u})+a_{1}(x)\phi_{1}(|u|)u
 \in f(x)-[h^{-}(x,u,v),h^{+}(x,u,v)], &x\in \mathbb{R}^N,\\
 -\mbox{div}(\phi_{2}(|\nabla{v}|)\nabla{v})+a_{2}(x)\phi_{2}(|v|)v
 \in f(x)-[k^{-}(x,u,v),k^{+}(x,u,v)], &x\in \mathbb{R}^N,\\
  u\in W^{1,\Phi_{1}}(\mathbb{R}^N),v\in W^{1,\Phi_{2}}(\mathbb{R}^N),
    \end{array}
 \right.
 \end{equation}
where $a_{1}(x)=V_{1}(x)-\lambda_{1}$ and $a_{2}(x)=V_{2}(x)-\lambda_{2}$ are continuous functions satisfying $a_{1}(x)>0$ and $a_{2}(x)>0$, $h,k:\mathbb{R}^N\times \mathbb{R} \times \mathbb{R} \rightarrow \mathbb{R}$
are locally bounded measurable functions not necessarily continuous and
\begin{align*}
&
   h^{-}(x,u_{1},v_{1})
=  \lim_{\delta\rightarrow 0}\text{ess}\inf\{h(x,u_{2},v_{2}):\;|u_{1}-u_{2}|<\delta,\;|v_{1}-v_{2}|<\delta\},
  \nonumber\\
&
   h^{+}(x,u_{1},v_{1})
=  \lim_{\delta\rightarrow 0}\text{ess}\sup\{h(x,u_{2},v_{2}):\;|u_{1}-u_{2}|<\delta,\;|v_{1}-v_{2}|<\delta\},
  \nonumber\\
&
   k^{-}(x,u_{1},v_{1})
=  \lim_{\delta\rightarrow 0}\text{ess}\inf\{k(x,u_{2},v_{2}):\;|u_{1}-u_{2}|<\delta,\;|v_{1}-v_{2}|<\delta\},
  \nonumber\\
&
   k^{+}(x,u_{1},v_{1})
=  \lim_{\delta\rightarrow 0}\text{ess}\sup\{k(x,u_{2},v_{2}):\;|u_{1}-u_{2}|<\delta,\;|v_{1}-v_{2}|<\delta\}.
\end{align*}
$h^{-},h^{+},k^{-},k^{+}$
are superposition measurable.
$\phi_i~(i=1, 2):(0,+\infty)\rightarrow(0,+\infty)$ are two functions
which satisfy the following conditions:
\begin{itemize}
\item[$(\phi_1)$] $\phi_i\in C^1(0,+\infty)$, $t\phi_i(t)\rightarrow0$ as
 $t\rightarrow0$, $t\phi_i(t)\rightarrow+\infty$ as
 $t\rightarrow+\infty$;
 \item[$(\phi_2)$] $t\rightarrow t\phi_i(t)$ are strictly increasing;
 \item[$(\phi_3)$] $1<l_i:=\inf_{t>0}\frac{t^2\phi_i(t)}{\Phi_i(t)}\leq \sup_{t>0}\frac{t^2\phi_i(t)}{\Phi_i(t)}=:m_i<\min\{N,l_{i}^{\ast}\}$, where $\Phi_i(t):=\int_{0}^{|t|}s\phi_i(s)ds, \ t\in \mathbb{R}$, $l_{i}^{\ast}=\frac{l_{i}N}{N-l_{i}}$;
 \item[$(\phi_4)$]
    there exist positive constants $C_{i,1}$ and $C_{i,2}$, $i=1,2$ such that
    $$
    C_{i,1}|t|^{l_i}\le \Phi_i(t)\le C_{i,2}|t|^{l_i},\ \ \forall |t|<1;
    $$
\end{itemize}
\par
The system \eqref{eq5-0} for the case where $h^{-}(x,u,v)=h^{+}(x,u,v):=H_u(x,u,v)$ and $k^{-}(x,u,v)=k^{+}(x,u,v):=H_v(x,u,v)$ are continuous functions becomes
\begin{equation}\label{eq5-1}
 \left\{
  \begin{array}{ll}
 -\mbox{div}(\phi_1(|\nabla{u}|)\nabla{u})+a_{1}(x)\phi_{1}(|u|)u
 = f(x)-H_u(x,u,v), &x\in \mathbb{R}^N,\\
 -\mbox{div}(\phi_{2}(|\nabla{v}|)\nabla{v})+a_{2}(x)\phi_{2}(|v|)v
 = f(x)-H_v(x,u,v), &x\in \mathbb{R}^N,\\
  u\in W^{1,\Phi_{1}}(\mathbb{R}^N),v\in W^{1,\Phi_{2}}(\mathbb{R}^N).
    \end{array}
 \right.
 \end{equation}

The study of such systems has been extensively pursued across various fields, including nonlinear elasticity, plasticity, and non-Newtonian fluids (for detailed discussions, see
\cite{Zhikov1987,Ruzicka2000,Fukagai1995,Fuchs2000}). In recent years, problems akin to system \eqref{eq5-1} have garnered significant attention from researchers. Notable contributions can be found in the works of \cite{wang2017,wang2017-2,wang2017-3,wang2017-4,Zhang2021,Zhang2022,Zhang2023,Liu-Shibo2019}, with further references therein.

Recently, there has been a surge of interest in establishing the existence and multiplicity of solutions for partial differential equations with discontinuous nonlinearities. These studies are crucial for analyzing mathematical models in physical and mechanical problems. Key applications include the obstacle problem, the Goldshtik problem for separated flows of an incompressible fluid, the phenomenon of superconductivity, and the Elenbaas problem of electric discharge origination. For detailed discussions, see \cite{Chang1980,Pavlenko2018,Potapov2011,Potapov2010,Sherman1960,VyKhoi2001,Yang1995,Santos2023} and the references therein.

In \cite{AlvesCO2015}, Alves et al investigated the existence of solutions for the following  multivalued elliptic equations:
\begin{equation}\label{eq5-2}
 -\mbox{div}(\phi_1(|\nabla{u}|)\nabla{u})
 \in \partial j(u)+\lambda h,\;\; x\in \Omega,
 \end{equation}
where $\Omega\subset \mathbb{R}^{N}$ is a bounded domain with smooth boundary $\partial \Omega$, $\lambda >0$ is a parameter, and $h$ is a measurable function. When the nonlinear term satisfies a critical growth condition, the authors obtained a solution for  equation \eqref{eq5-0} by using the Ekeland's variational method.
In \cite{WuXian2021}, Wu et al considered the existence of three solutions for a class of reaction-diffusion equations with discontinuous nonlinearities. The authors first established  a new existence theorem  for three critical points of  locally Lipschitz functional. Subsequently, by applying this new theorem, they demonstrated that the inclusion problem has at least three solutions under a superlinear growth condition.
In \cite{AlvesCO2022}, Alves et al considered the existence of multiple solutions for a class of noncooperative elliptic system. Under certain technical conditions, the authors showed that the problem admits infinitely many large-energy solutions using the nonsmooth Fountain theorem without a symmetry condition.

Motivated by the work in \cite{WuXian2021,AlvesCO2022}, this paper aims to extend the results of \cite{WuXian2021} to the quasilinear inclusion system \eqref{eq5-0}. We assume that $H$ exhibits sub-$p$ growth with respect to $(u,v)$ and that the perturbation term  $f$ is a continuous function. Our main result establishes the multiplicity of nontrivial solutions for system \eqref{eq5-0}. We will prove the following:
\par
\noindent
\begin{theorem}\label{t5-1}
  Assume that $(\phi_1)$--$(\phi_4)$   and the following conditions hold:
\begin{itemize}
\item[$(H_0)$]
$H(x,u,v)=\int_{0}^{u}h(x,t,v)dt+\int_{0}^{v}k(x,0,s)ds=\int_{0}^{u}h(x,t,0)dt+\int_{0}^{v}k(x,u,s)ds$, $\forall\;(x, u,v)\in  \mathbb{R}^{N}\times \mathbb{R}\times \mathbb{R}$,
$H(x,u,v)=0$ if and only if $u=v=0$;
\item[$(H_1)$] there exist two  constants $C_0>0$ such that
\begin{equation*}
 |h(x,u,v)|+|k(x,u,v)|\leq C_0\left(|u|^{p-1}+|v|^{p-1}\right)
 \end{equation*}
for all  $(x, u,v)\in  \mathbb{R}^{N}\times \mathbb{R}\times \mathbb{R}$, where
$p \in (\max\{l_{1},l_{2}\},\min\{l_{1}^{\ast},l_{2}^{\ast}\})$;
\item[$(H_2)$] there exists a  constant $\widetilde{C}_0>0$ such that, for all
$(u,v)\in  \mathbb{R}\times \mathbb{R}$ and $x\in  \mathbb{R}^{N}$,
\begin{equation*}
 |\varrho_{1}|+|\varrho_{2}|\leq \widetilde{C}_0\left(|u|^{p-1}+|v|^{p-1}\right)
 \end{equation*}
for all  $\varrho_{1} \in \left[h^{-}(x,u,v),h^{+}(x,u,v)\right]$ and
$\varrho_{2} \in \left[k^{-}(x,u,v),k^{+}(x,u,v)\right]$;
\item[$(H_3)$] for all
$(u,v)\in  \left(\mathbb{R}\times \mathbb{R}\right)\backslash \{(0,0)\}$ and $x\in  \mathbb{R}^{N}$,
\begin{equation*}
0< p H(x,u,v)\leq  u \varrho_{1} + v \varrho_{2}
 \end{equation*}
for all  $\varrho_{1} \in \left[h^{-}(x,u,v),h^{+}(x,u,v)\right]$ and
$\varrho_{2} \in \left[k^{-}(x,u,v),k^{+}(x,u,v)\right]$;
\item[$(H_4)$] there exists a  constant $C_1>0$ such that
\begin{equation*}
 H(x,u,v)\geq  C_1\left(|u|^{p}+|v|^{p}\right)
 \end{equation*}
for all  $|(u,v)|\leq \delta $;
\item[$(F_{0})$]
$f \in L^{q}(\mathbb{R}^{N})$, $\frac{1}{q}+\frac{1}{p}=1$;
\item[$(V_{0})$]
$V_{i} \in C(\mathbb{R}^{N},\mathbb{R})$,
$V_{i}(x)^{-1}\in L^{\frac{r}{1-r}}(\mathbb{R}^{N})$
\mbox{and} there exists a constant $C_{4}$ such that
$0<\frac{8}{C_{i,5}h_{i,3}^{l_{i}}}<\lambda_{i}<C_{4}<V_{i}(x)$,\; $i=1,2$, where $C_{i,5}=\min\left\{C_{i,1},\Phi_{i}(1)\right\}$, $C_{i,1}$ and $h_{i,3}$ are embedding constants.
\end{itemize}
 Then  system \eqref{eq5-0} possesses at least three distinct solutions.
\end{theorem}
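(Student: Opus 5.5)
We work on $W=W^{1,\Phi_1}(\mathbb{R}^N)\times W^{1,\Phi_2}(\mathbb{R}^N)$ with the norm $\|(u,v)\|=\|u\|_{1}+\|v\|_{2}$, where $\|\cdot\|_i$ is the Luxemburg norm weighted by $a_i$. The solutions will be critical points, in the sense of Clarke, of the locally Lipschitz functional
\begin{equation*}
I(u,v)=\int_{\mathbb{R}^N}\Big(\Phi_1(|\nabla u|)+a_1(x)\Phi_1(|u|)+\Phi_2(|\nabla v|)+a_2(x)\Phi_2(|v|)\Big)dx+\int_{\mathbb{R}^N}H(x,u,v)\,dx-\int_{\mathbb{R}^N}f(x)(u+v)\,dx .
\end{equation*}
The existence of three solutions will come from the three critical points theorem of Wu--Zhou \cite{WuXian2021}. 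That theorem is of mountain-pass/minimization type for locally Lipschitz functionals. It asks for the nonsmooth (PS) condition, for two disjoint sublevel components, or a local minimum separated by a mountain, and for a linking geometry. We therefore decompose $I=\Psi+J-L$, where $\Psi$ is the $C^1$ modular part, $J(u,v)=\int H$ and $L$ is the linear term.

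\textbf{Step 1 (functional setting).} Using $(\phi_1)$--$(\phi_4)$ we record the modular inequalities
\begin{equation*}
\min\{t^{l_i},t^{m_i}\}\Phi_i(s)\le\Phi_i(ts)\le\max\{t^{l_i},t^{m_i}\}\Phi_i(s).
\end{equation*}
From them we get $\Psi(u,v)\ge C_{i,5}\min\{\|u\|^{l_1},\|u\|^{m_1}\}+\dots$. Next we prove the continuous embeddings $W^{1,\Phi_i}\hookrightarrow L^p(\mathbb{R}^N)$ with constants $h_{i,3}$. The condition $V_i^{-1}\in L^{r/(1-r)}$ in $(V_0)$ gives compactness through a H\"older argument on $\{|x|>R\}$, so these embeddings are compact. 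By $(H_1)$ and a Lebourg/Chang-type lemma, $J$ is locally Lipschitz on $L^p\times L^p$ and
\begin{equation*}
\partial J(u,v)\subset\{(\varrho_1,\varrho_2):\varrho_1(x)\in[h^-,h^+],\ \varrho_2(x)\in[k^-,k^+]\ \text{a.e.}\}.
\end{equation*}
Since the embedding is compact, $\partial J$ is compact as well. Consequently any critical point $0\in\partial I(u,v)$ is a weak solution of \eqref{eq5-0}. $(H_0)$ is used here to identify $H_u$ and $H_v$ with $h$ and $k$ consistently.

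\textbf{Step 2 (PS condition).} Take $(w_n)$ with $I(w_n)$ bounded and $\lambda_I(w_n)\to0$, and choose $w_n^*=\Psi'(w_n)+(\varrho_{1,n},\varrho_{2,n})-f$. Estimating $pI(w_n)-\langle w_n^*,w_n\rangle$ with $(H_3)$ cancels the $H$-terms with the right sign. Since $\langle\Psi'(w),w\rangle\le m_i\cdot(\text{modular})$, and $m_i<p$ fails in general, we argue differently: because $H\ge0$ is a plus sign, coercivity comes directly from $I\ge\Psi-C\|f\|_q\|w\|$. Hence $(w_n)$ is bounded. We then extract a weakly convergent subsequence; compactness of $\partial J$ and of $L$ together with the $(S_+)$ property of $\Psi'$ (from strict monotonicity $(\phi_2)$ and uniform convexity) give strong convergence.

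\textbf{Step 3 (geometry and the main obstacle).} $I$ is coercive and weakly lower semicontinuous, so a global minimizer exists. Near $0$, $(H_4)$ and $(H_1)$ give $C_1\|w\|_p^p\le J(w)\le C\|w\|_p^p$. With $f$ small in $L^q$ relative to the constants in $(V_0)$, we look for a sphere $\|w\|=\rho<1$ on which $I\ge\alpha>0$ while some point inside has $I<0$ (taking $w=t e$ with $\int f e>0$ and $t$ small). This yields a local minimum $w_1$ with negative level. The shift $\lambda_i$ and the bound $\lambda_i>8/(C_{i,5}h_{i,3}^{l_i})$ should let us compare $\Psi$ with the $L^p$-type lower bound and exhibit a second region of negative energy, or a second local minimum, separated by the mountain $\{\|w\|=\rho\}$. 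The third critical point then comes from the nonsmooth mountain pass, which is exactly the conclusion of Wu--Zhou's theorem. I expect this step to be the hardest: in particular, checking that the numerical condition in $(V_0)$ indeed produces the two separated negative-energy sets required by the theorem. I would first try test functions built from a fixed $e\in C_0^\infty$, scaling $t$ and using $(\phi_4)$ for $|t|<1$. Distinctness of the three points follows from their distinct energy levels, or from the theorem's own conclusion.
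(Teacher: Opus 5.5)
Your Steps 1--2 essentially reproduce Lemmas~\ref{t5-10} and~\ref{t5-11}: coercivity from $a_i>0$, $H\ge 0$ and H\"older on the $f$-term, boundedness of (PS) sequences, the compact embedding into $L^p$ plus $(H_2)$ to kill the selection term, and monotonicity/$(S_+)$ of the principal part. The genuine gap is Step 3. It is the substance of the theorem, you leave it open, and it aims at the wrong target.

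The Wu--Zhou theorem recorded in Section~2 is a local-linking result. It needs $I$ bounded below, a splitting $W=X_1\oplus X_2$ with $0<\dim X_1<\infty$, a radius $R$ with $\max_{S_R\cap X_1}I<\inf_{X_2}I$ and $\max_{\overline{B}_R\cap X_1}I<r$, and (PS) on $I^{-1}([c,r))$. No mountain-pass or two-minima configuration is required, and you never produce a splitting.

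The configuration you do look for cannot exist. Since $a_i>0$ and $H\ge 0$, you have $I(w)\ge \Psi(w)-C\|f\|_q\|w\|$. So for $\|f\|_q$ small the set $\{I\le 0\}$ lies in a ball $B_\rho$ with $\rho\to 0$ as $\|f\|_q\to 0$, and there is no second negative-energy region beyond a sphere on which $I\ge\alpha>0$. Having absorbed $\lambda_i$ into $a_i>0$, the shift cannot create negative energy anywhere. Reading $h_{i,3}$ as an embedding constant of $W^{1,\Phi_i}\hookrightarrow L^p$ does not help either: an upper bound $\|u\|_p\le h\|u\|$ never lets $-\lambda_i\int\Phi_i(|u|)$ dominate.

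In the paper, $X_1=Y_{2k}$ and $X_2=Z_{2k}$ are built from biorthogonal systems $\{e_{ij},e_{ij}^*\}$ of $W_i$, interleaved into $\{\eta_{(j)}\}$. The constant $h_{i,3}$ is the finite-dimensional constant in $\|u\|_{L^{l_i}}\ge h_{i,3}\|u\|_{1,\Phi_i}$ on $Y_{2k}$, a reverse inequality unavailable on all of $W$. The paper splits $a_i=V_i-\lambda_i$ and uses $\int\Phi_i(|u|)\ge C_{i,5}\|u\|_{l_i}^{l_i}$. This bounds $I$ on $S_M\cap Y_{2k}$ from above by $e_{11}(M_1)+e_{21}(M_2)+O(\|f\|_q)$, with $e_{11}(M_1)+e_{21}(M_2)<0$. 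On $Z_{2k}$ it bounds $I$ from below by the coercivity estimate, which tends to $0$ as $\|f\|_q\to 0$. This needs the extra hypothesis $\|f\|_q<\varepsilon$ of Lemma~\ref{lemma3.10}, which is absent from the theorem and which you also assume.

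Be warned that your own observation defeats this estimate too. Since $I(u,v)\ge -\int f(u+v)$, taking $f=0$ would give $0\le I\le e_{11}(M_1)+e_{21}(M_2)<0$ on $S_M\cap Y_{2k}$. Hence the chain \eqref{3.1.21} cannot hold when $(V_0)$ does. Likewise, the lower bound \eqref{3.1.18}, obtained ``on $Z_{2k}$'', uses nothing about $Z_{2k}$ and therefore also bounds $\alpha$ from below. So Step 3 cannot be completed by transcribing the paper's estimate. Establishing $\alpha<\beta$ would require a genuinely different mechanism, or different hypotheses.
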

\begin{remark}\label{t5-2}
There is the set of pair of functions $(h,k)$ that satisfy the conditions $(H_0)$-$(H_4)$. For instance,
choose $a>0$, $q_{1}>0$, $q_{2}>0$ such that $q_{1}+q_{2}=p$ and let
$$
h(x,u,v)=p\mathcal{H}_{e}(|u|-a)|u|^{p-2}u+q_{1}|u|^{q_{1}-2}u|v|^{q_{2}}+p|u|^{p-2}u,
$$
$$
k(x,u,v)=p|v|^{p-2}v+q_{2}|v|^{q_{2}-2}v|u|^{q_{1}},
$$
where $\mathcal{H}_{e}:\mathbb{R}\rightarrow \mathbb{R}$ denotes the Heaviside function, i.e.,
\begin{equation*}
 \mathcal{H}_{e}(t)
=
 \left\{
  \begin{array}{ll}
 0,\;\;\text{絞}\; t\leq 0,\\
1,\;\;\text{絞}\; t> 0.\\
    \end{array}
 \right.
 \end{equation*}
Then $h$ and $k$ satisfy $(H_0)$-$(H_4)$.
\end{remark}

\begin{remark}\label{t5-3}
By $(\phi_4)$ and Lemma \ref{lemma2.6}, we have
\begin{eqnarray}\label{3.1.14}
\Phi_{i}(t)\geq C_{i,5}|t|^{l_{i}}, \forall \; t\geq 0,
\end{eqnarray}
where $C_{i,5}=\min\left\{C_{i,1},\Phi_{i}(1)\right\}$.
\end{remark}

 \allowdisplaybreaks
\section{Preliminaries}\label{section 2}
First, we focus on a survey of concepts and results from Orlicz and Orlicz-Sobolev spaces that
will be used in the text.
\begin{definition}\cite{Adams2003}\label{definition2.1}
Let $b$ : $[0,+\infty)\rightarrow [0,+\infty)$ be a right continuous, monotone increasing function with
\begin{itemize}
\item[$\rm(1)$] $b(0)=0$;
\item[$\rm(2)$] $\lim_{t\rightarrow +\infty} b(t)=+\infty$;
\item[$\rm(3)$] $b(t)>0 $ whenever $t>0$.
\end{itemize}
Then the function defined on $\mathbb{R}$ by $B(t)=\int_{0}^{|t|}b(s)ds$ is called as an $N$-function.
\end{definition}
\par
By the definition of $N$-function $B$, it is obvious that $B(0)=0$ and $B$ is strictly convex. We recall that an $N$-function $B$ satisfies a $\Delta_2$-condition globally (or near infinity) if
$$
  \sup_{t>0}\frac{B(2t)}{B(t)}<+\infty \ \ \left(\mbox{ or } \limsup_{t\rightarrow \infty}\frac{B(2t)}{B(t)}<+\infty\right),
$$
which implies that there exists a constant $K>0$, such that $B(2t)\leq K B(t)$ for all $t\geq0$ (or $t\geq t_0>0$). We also state the equivalent form that $B$ satisfies a $\Delta_2$-condition globally (or near infinity) if and only if for any $c\geq 1$, there exists a constant $K_c>0$ such that $B(ct)\leq K_c B(t)$ for all $t\geq0$ (or $t\geq t_0>0$).
\par
\noindent
\begin{definition}\cite{Adams2003}\label{definition2.2}
For an $N$-function $B$, we define
$$ \widetilde{B}(t)=\int_{0}^{|t|}b^{-1}(s)ds, \quad t\in \mathbb{R},$$
 where $b^{-1}$ is the right inverse of the right derivative $b$ of $B$. Then $\widetilde{B}$ is an $N$-function
called as the complement of $B$.
\end{definition}
\par
It holds that Young's inequality (see \cite{Adams2003,M. M. Rao2002})
\begin{equation}\label{2.1.1}
st\leq B (s)+\widetilde{B}(t), \quad s, t\geq 0
\end{equation}
and the inequality (see \cite[Lemma A.2]{Fukagai2006})
 \begin{equation}\label{2.1.2}
 \widetilde{B}(b(t))\leq B(2t), \quad t\geq 0.
 \end{equation}
\par
Now, we recall the Orlicz space $L^{B}(\Omega)$ associated with $N$-function $B$.
The Orlicz space $L^{B}(\Omega)$ is the vectorial space of the measurable functions $u: \Omega\rightarrow \mathbb{R}$ satisfying
$$\int_{\Omega}B(|u|)dx<+\infty,$$
where $\Omega \subset \mathbb{R}^N$ is an open set. It was obvious that $L^{B}(\Omega)$ is a Banach space endowed with Luxemburg norm
$$
\|u\|_{B}:=\inf \left\{\lambda >0: \int_{\Omega}B \left(\frac{u}{\lambda}\right)dx\le 1\right\}.
$$
 The fact that $B$ satisfies $\Delta_2$-condition globally implies that
\begin{equation}\label{2.1.3+}
 u_n\rightarrow u \mbox{ in } L^{B}(\Omega) \Longleftrightarrow \int_{\Omega}B(u_n-u)dx\rightarrow 0.
 \end{equation}
 Moreover, a generalized type of H\"{o}lder's inequality (see \cite{Adams2003,M. M. Rao2002})
$$\left| \int_{\Omega}uvdx \right|\leq 2\|u\|_{\Phi}\|v\|_{\widetilde{\Phi}}, \quad \mbox{ for all } u \in L^{\Phi}(\Omega)\mbox{ and } v \in L^{\widetilde{\Phi}}(\Omega)$$
can be gained by applying Young's inequality \eqref{2.1.1}.
\par
The corresponding Orlicz-Sobolev space (see \cite{Adams2003,M. M. Rao2002}) is defined by
$$
W^{1, B}(\Omega):=\left\{u \in L^{B}(\Omega): \frac{\partial u}{\partial x_i} \in L^{B}(\Omega), i=1,\cdots, N\right\}
$$
with the norm
$$\|u\|:=\|u\|_{B}+\|\nabla u \|_{B}.$$
\par
When $\Omega=\mathbb{R}^N$, the Orlicz-Sobolev space $W^{1, B}(\mathbb{R}^N)$ is the completion of $C_{0}^{\infty}(\mathbb{R}^N)$ under the norm
$$\|u\|:=\|u\|_{B}+\|\nabla u \|_{B}.$$
\par
In what follows,  we recall some inequalities and theorems which we will use. For more details, we refer the reader to the references \cite{Adams2003,Fukagai2006,Liu-Shibo2019}.
\par
\noindent
\begin{lemma}\cite{Adams2003,Fukagai2006}\label{lemma2.6}
If $B$ is an $N$-function, then the following conditions are equivalent:
\begin{itemize}
	\item[$\rm(1)$]
\begin{equation}\label{2.1.5}
1\leq l=\inf_{t>0}\frac{tb(t)}{B(t)}\leq\sup_{t>0}\frac{tb(t)}{B(t)}=m<+\infty;
\end{equation}
\item[$\rm(2)$] let $\zeta_0(t)=\min\{t^l, t^m\}$ and $\zeta_1(t)=\max\{t^l, t^m\}$, $t\geq0$. $B$ satisfies
$$\zeta_0(t)B(\rho)\leq B(\rho t)\leq \zeta_1(t)B(\rho), \quad \forall \rho, t\geq 0;$$
\item[$\rm(3)$] $B$ satisfies a $\Delta_2$-condition globally.
\end{itemize}
\end{lemma}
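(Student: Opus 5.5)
The statement immediately above is Lemma \ref{lemma2.6}, the standard characterization of $N$-functions with bounded index $tb(t)/B(t)$. I would prove the cycle (1)$\Rightarrow$(2)$\Rightarrow$(3)$\Rightarrow$(1). Throughout I use that $B$ is absolutely continuous with $B'(t)=b(t)$ a.e. (right derivative everywhere), and that $B(t)>0$ for $t>0$.

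\emph{(1)$\Rightarrow$(2).} Fix $\rho>0$; the case $\rho=0$ or $t=0$ is trivial. Set $g(s)=\ln B(\rho s)$ for $s>0$. Then a.e.
\begin{equation*}
g'(s)=\frac{\rho b(\rho s)}{B(\rho s)}=\frac{1}{s}\cdot\frac{(\rho s)b(\rho s)}{B(\rho s)}\in\left[\frac{l}{s},\frac{m}{s}\right].
\end{equation*}
Integrate from $1$ to $t$. For $t\ge1$ this gives $l\ln t\le \ln\frac{B(\rho t)}{B(\rho)}\le m\ln t$, that is, $t^lB(\rho)\le B(\rho t)\le t^mB(\rho)$. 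For $t<1$ the inequalities reverse, giving $t^mB(\rho)\le B(\rho t)\le t^lB(\rho)$. Both cases together are exactly $\zeta_0(t)B(\rho)\le B(\rho t)\le\zeta_1(t)B(\rho)$. Integrating $g'$ is legitimate because $B$ is locally Lipschitz and positive on $[\rho\min\{1,t\},\rho\max\{1,t\}]$, so $g$ is absolutely continuous there.

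\emph{(2)$\Rightarrow$(3).} Take $t=2$ in (2). This gives $B(2\rho)\le 2^mB(\rho)$ for all $\rho\ge0$, which is the global $\Delta_2$-condition.

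\emph{(3)$\Rightarrow$(1).} This is the only step with real content.

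For the lower bound, monotonicity of $b$ gives $B(t)=\int_0^t b(s)\,ds\le tb(t)$, so $tb(t)/B(t)\ge1$ and hence $l\ge1$.

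For the upper bound, use monotonicity again together with $\Delta_2$:
\begin{equation*}
tb(t)\le\int_t^{2t}b(s)\,ds\le B(2t)\le KB(t).
\end{equation*}
Hence $\sup_{t>0} tb(t)/B(t)\le K<\infty$, so $m<+\infty$.

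The main point to watch is that the quotient $tb(t)/B(t)$ is only controlled through monotonicity of $b$. No differentiability beyond right-continuity is needed, and the argument above uses only that.
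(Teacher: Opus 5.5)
Your proof is correct. The three steps are the standard argument for this characterization: integrate $(\ln B(\rho s))'\in[l/s,m/s]$ from $1$ to $t$; take $t=2$; and use $B(t)\le tb(t)\le\int_t^{2t}b(s)\,ds\le B(2t)\le KB(t)$. The paper gives no proof of its own and simply cites Adams and Fukagai--Ito--Narukawa for the result.
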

\noindent
\begin{lemma}\cite{Fukagai2006}\label{lemma2.7}
 If $B$ is an $N$-function and \eqref{2.1.5} holds, then $B$ satisfies
$$\zeta_0(\|u\|_{B})\leq\int_{\Omega}B(u)dx\leq\zeta_1(\|u\|_{B}), \quad \forall u \in L^{B}(\Omega).$$
\end{lemma}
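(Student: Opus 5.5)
The statement immediately above is Lemma \ref{lemma2.7}: for an $N$-function $B$ satisfying \eqref{2.1.5},
$$\zeta_0(\|u\|_{B})\leq\int_{\Omega}B(u)\,dx\leq\zeta_1(\|u\|_{B})\quad\text{for all } u\in L^{B}(\Omega).$$
I would derive both inequalities from the scaling estimates in Lemma \ref{lemma2.6}(2), combined with the definition of the Luxemburg norm. Fix $u\in L^{B}(\Omega)$. If $u=0$ a.e., both sides vanish, since $B(0)=0$ and $\zeta_0(0)=\zeta_1(0)=0$. So assume $\lambda:=\|u\|_{B}>0$.

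\textbf{Step 1: the modular at the norm equals one.} I would first show that
$$\int_\Omega B\!\left(\frac{u}{\lambda}\right)dx=1.$$
Let $\lambda_n\downarrow\lambda$ with $\int_\Omega B(u/\lambda_n)\,dx\le 1$. Since $B(u/\lambda_n)$ increases to $B(u/\lambda)$ by continuity and monotonicity of $B$, monotone convergence gives $\int_\Omega B(u/\lambda)\,dx\le 1$.

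For the reverse inequality, suppose $\int_\Omega B(u/\lambda)\,dx<1$. Then for $t>1$ close to $1$, Lemma \ref{lemma2.6}(2) gives
$$\int_\Omega B\!\left(\frac{u}{\lambda}\,t\right)dx\le t^m\int_\Omega B\!\left(\frac{u}{\lambda}\right)dx\le 1.$$
Hence $\lambda/t$ is admissible in the definition of the norm, which contradicts minimality of $\lambda$. This is where the $\Delta_2$-condition (equivalently $m<\infty$) is essential, and I expect it to be the only delicate point.

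\textbf{Step 2: the two-sided bound.} Write $u=\lambda\cdot(u/\lambda)$ and apply Lemma \ref{lemma2.6}(2) pointwise with $\rho=|u(x)|/\lambda$ and $t=\lambda$:
$$\zeta_0(\lambda)\,B\!\left(\frac{u}{\lambda}\right)\le B(u)\le \zeta_1(\lambda)\,B\!\left(\frac{u}{\lambda}\right).$$
Integrating over $\Omega$ and using Step 1 yields
$$\zeta_0(\|u\|_B)\le\int_\Omega B(u)\,dx\le\zeta_1(\|u\|_B),$$
which is the claim.
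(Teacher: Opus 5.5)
Your proof is correct and complete. It is the standard argument from the cited work of Fukagai et al., which the paper quotes without proof. You first show that the modular of $u/\|u\|_B$ equals $1$; this is exactly where $m<\infty$, i.e.\ the $\Delta_2$-condition, is needed. You then apply the pointwise scaling bounds of Lemma~\ref{lemma2.6}(2) with $\rho=|u(x)|/\|u\|_B$ and $t=\|u\|_B$, and integrate.
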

\noindent
\begin{lemma}\cite{Fukagai2006}\label{lemma2.8}
 If $B$ is an $N$-function and \eqref{2.1.5} holds with $l>1$. Let $\widetilde{B}$ be the complement of $B$ and $\zeta_2(t)=\min\{t^{\widetilde{l}},t^{\widetilde{m}}\}$, $\zeta_3(t)=\max\{t^{\widetilde{l}},t^{\widetilde{m}}\}$ for $t\geq0$, where $\widetilde{l}:=\frac{l}{l-1}$ and $\widetilde{m}:=\frac{m}{m-1}$. Then $\widetilde{B}$ satisfies
\begin{itemize}
	\item[$\rm(1)$]
$$\widetilde{m}=\inf_{t>0}\frac{t\widetilde{B}^{'}(t)}{\widetilde{B}(t)}\leq\sup_{t>0}\frac{t\widetilde{B}^{'}(t)}{\widetilde{B}(t)}=\widetilde{l};$$
\item[$\rm(2)$]
$$\zeta_2(t)\widetilde{B}(\rho)\leq \widetilde{B}(\rho t)\leq \zeta_3(t)\widetilde{B}(\rho), \quad \forall  \rho, t\geq 0;$$
\item[$\rm(3)$]
$$\zeta_2(\|u\|_{\widetilde{B}})\leq\int_{\Omega}\widetilde{B}(u)dx\leq\zeta_3(\|u\|_{\widetilde{B}}), \quad \forall u \in L^{\widetilde{B}}(\Omega).$$
\end{itemize}
\end{lemma}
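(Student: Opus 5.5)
The statement immediately above is Lemma \ref{lemma2.8}, which concerns the complementary function $\widetilde{B}$; it is quoted from \cite{Fukagai2006}. I would prove it by first establishing part (1) directly from the definition of $\widetilde{B}$ through the inverse function $b^{-1}$. Parts (2) and (3) then follow from (1) by the same mechanism that gives Lemma \ref{lemma2.6} and Lemma \ref{lemma2.7} for $B$.

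\textbf{Step 1: part (1).} Write $\widetilde{b}=b^{-1}$, so that $\widetilde{B}'(s)=\widetilde{b}(s)$ for a.e. $s$. Setting $s=b(t)$ gives $\widetilde{B}'(s)=t$, and Young's equality case yields
\begin{equation*}
\widetilde{B}(s)=tb(t)-B(t).
\end{equation*}
Hence
\begin{equation*}
\frac{s\widetilde{B}'(s)}{\widetilde{B}(s)}=\frac{tb(t)}{tb(t)-B(t)}=\frac{\kappa}{\kappa-1},\qquad \kappa:=\frac{tb(t)}{B(t)}\in[l,m].
\end{equation*}
The map $\kappa\mapsto \kappa/(\kappa-1)$ is decreasing on $(1,\infty)$, and here we use $l>1$. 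So the ratio lies in $[\widetilde{m},\widetilde{l}]$, and the infimum and supremum are exactly $\widetilde{m}$ and $\widetilde{l}$, since $l$ and $m$ are the infimum and supremum of $\kappa$.

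This step is the main obstacle, because $b$ is only right continuous and increasing. One has to handle the points where $b$ jumps, where $b^{-1}$ is constant on an interval, and the flat pieces of $b$, where $b^{-1}$ jumps. I would treat these by approximation or by checking the inequalities at the one-sided derivatives. In the present paper $b(t)=t\phi_i(t)$ is continuous and strictly increasing by $(\phi_1)$–$(\phi_2)$, so no such difficulty arises there.

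\textbf{Step 2: part (2).} Fix $\rho>0$ and integrate the bounds from (1), namely
\begin{equation*}
\frac{\widetilde{m}}{\tau}\le \frac{\widetilde{B}'(\tau)}{\widetilde{B}(\tau)}\le \frac{\widetilde{l}}{\tau},
\end{equation*}
over $\tau$ from $\rho$ to $\rho t$.
\begin{itemize}
\item For $t\ge1$ this gives $t^{\widetilde{m}}\widetilde{B}(\rho)\le\widetilde{B}(\rho t)\le t^{\widetilde{l}}\widetilde{B}(\rho)$.
\item For $t<1$ the inequalities reverse, giving $t^{\widetilde{l}}\widetilde{B}(\rho)\le\widetilde{B}(\rho t)\le t^{\widetilde{m}}\widetilde{B}(\rho)$.
\end{itemize}
Together these are exactly $\zeta_2(t)\widetilde{B}(\rho)\le\widetilde{B}(\rho t)\le\zeta_3(t)\widetilde{B}(\rho)$. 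This is the argument of Lemma \ref{lemma2.6}(1)$\Rightarrow$(2) with $(l,m)$ replaced by $(\widetilde{m},\widetilde{l})$. The cases $\rho=0$ or $t=0$ are trivial.

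\textbf{Step 3: part (3).} Let $u\ne0$ and $\lambda=\|u\|_{\widetilde{B}}$. Because $\widetilde{B}$ satisfies $\Delta_2$ by Step 2, the modular is continuous in the scaling parameter, so $\int_\Omega\widetilde{B}(u/\lambda)\,dx=1$. Applying Step 2 with $\rho=|u(x)|/\lambda$ and $t=\lambda$, and integrating over $\Omega$, gives
\begin{equation*}
\zeta_2(\lambda)\le\int_\Omega\widetilde{B}(u)\,dx\le\zeta_3(\lambda).
\end{equation*}
This mirrors the proof of Lemma \ref{lemma2.7}.
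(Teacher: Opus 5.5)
Your proof is correct and is the standard argument behind this result: Young's equality $\widetilde{B}(b(t))=tb(t)-B(t)$ transfers the index bounds through the decreasing map $\kappa\mapsto\kappa/(\kappa-1)$, and then you integrate the logarithmic derivative and normalize the modular, as for Lemma \ref{lemma2.6} and Lemma \ref{lemma2.7}; the paper gives no proof of its own and only cites \cite{Fukagai2006}. For a merely right-continuous $b$ your one-sided fix gives the bounds, because $t=b^{-1}(s)$ satisfies $b(t^-)\le s\le b(t)$ and $tb(t^-)/B(t)\ge l$ (let $\tau\uparrow t$), but the exact equality of the infimum and supremum in (1) then still needs the short approximation argument you only allude to.
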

\par
\noindent
\begin{remark}\label{remark 2.9}
By Lemma 2.3 and Lemma 2.5, assumptions $(\phi_1)$--$(\phi_3)$ show that $\Phi_i ~(i=1, 2)$ and $\widetilde{\Phi}_i ~(i=1, 2)$ are $N$-functions satisfying $\Delta_2$-condition globally. Thus $L^{\Phi_i}(\mathbb{R}^N) (i=1, 2)$ and $W^{1, \Phi_i}(\mathbb{R}^N) (i=1, 2)$ are separable and reflexive Banach spaces (see \cite{Adams2003,M. M. Rao2002}).
\end{remark}
Let $\Psi$ be an $N$-function verifying $\Delta_2$-condition. If
 \begin{equation}\label{2.1.6}
\overline{\lim_{t\rightarrow 0}}\frac{\Psi(t)}{B(t)}< +\infty\;\;\;\mbox{and}\;\;\;
\mathop{\overline{\lim}}_{|t|\rightarrow +\infty}\frac{\Psi(t)}{B_{\ast}(t)}< +\infty,
\end{equation}
then we have a continuous embedding $W^{1,B}(\mathbb{R}^{N})\hookrightarrow L^{\Psi}(\mathbb{R}^{N})$.
Moreover, if
 \begin{equation}\label{2.1.7}
\lim_{|t|\rightarrow 0}\frac{\Psi(t)}{B(t)}< +\infty\;\;\;\mbox{and}\;\;\;
\lim_{|t|\rightarrow +\infty}\frac{\Psi(t)}{B_{\ast}(t)}=0,
 \end{equation}
then the embedding $W^{1,B}(\mathbb{R}^{N})\hookrightarrow L_{loc}^{\Psi}(\mathbb{R}^{N})$ is compact and we call that such $\Psi$ satisfies the subcritical condition.
\par
If we assume the following hypotheses for $V(x)$.
\begin{itemize}
\item[$(V)$]
$V \in C(\mathbb{R}^{N},\mathbb{R})$ satisfies
$V_{0}:=\inf_{x\in \mathbb{R}^{N}}V(x)>0$.
\end{itemize}
Then, we can consider the subspace $X$ of $W^{1,B}(\mathbb{R}^N)$ which defined by
\begin{eqnarray}\label{dddc1}
  X=\left\{u\in W^{1,B}(\mathbb{R}^N)\Big| \int_{\mathbb{R}^N}V(x)B(|u|)dx<\infty\right\}
\end{eqnarray}
  with the norm
\begin{eqnarray}\label{2.1.4}
  \|u\|_{1,B}=\|\nabla u\|_B+\|u\|_{B,V},
\end{eqnarray}
where
$$
  \|u\|_{B,V}=\inf\left\{\alpha>0\Big|\int_{\mathbb{R}^N} V(x)B\left(\frac{|u|}{\alpha}\right)dx\le 1\right\}.
$$
It is easy to see that $(X,\|\cdot\|)$ is a separable and reflexive Banach space (see \cite{Liu-Shibo2019}).
\noindent
\begin{lemma}\cite{Liu-Shibo2019}\label{lemma2.8.1}
 If $B$ is an $N$-function and \eqref{2.1.5} holds with $l>1$, $V(x)\in C(\mathbb{R}^{N})$,
 $V_{0}=\inf_{\mathbb{R}^{N}}V(x)>0$. Then for all $u\in X$ we have
$$\zeta_0(\|u\|_{B,V})\leq\int_{\mathbb{R}^{N}}V(x)B(u)dx\leq\zeta_1(\|u\|_{B,V}).$$
\end{lemma}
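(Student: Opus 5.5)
The plan is to reduce the statement to the pointwise two-sided estimate of Lemma \ref{lemma2.6}(2). The only real work is to show that at $\alpha:=\|u\|_{B,V}$ the weighted modular is exactly one, i.e.
$$
\int_{\mathbb{R}^N}V(x)B\left(\frac{|u|}{\alpha}\right)dx=1 .
$$
If $u=0$ both sides of the claimed inequality vanish, so I assume $u\neq 0$, which gives $\alpha>0$.

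First I would study the function $g(\lambda):=\int_{\mathbb{R}^N}V(x)B(|u|/\lambda)\,dx$ for $\lambda>0$.

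\emph{Finiteness.} Since $u\in X$, we have $\int V B(|u|)\,dx<\infty$. By Lemma \ref{lemma2.6}(2) with $\rho=|u(x)|$ and $t=1/\lambda$,
$$
B(|u|/\lambda)\le \zeta_1(1/\lambda)B(|u|),
$$
so $g(\lambda)<\infty$ for every $\lambda>0$.

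\emph{Continuity.} For $\lambda$ in a compact subinterval $[\lambda_0,\lambda_1]\subset(0,\infty)$ the integrand is dominated by $\zeta_1(1/\lambda_0)V B(|u|)\in L^1$. Dominated convergence therefore shows $g$ is continuous on $(0,\infty)$.

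\emph{Strict decrease.} $B$ is strictly increasing on $[0,\infty)$, $V\ge V_0>0$, and $\{u\neq0\}$ has positive measure. Hence $g$ is strictly decreasing.

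\emph{Limits.} The lower bound $B(|u|/\lambda)\ge \zeta_0(1/\lambda)B(|u|)$ gives $g(\lambda)\to\infty$ as $\lambda\to0^+$. The upper bound gives $g(\lambda)\le\zeta_1(1/\lambda)\int VB(|u|)\to0$ as $\lambda\to\infty$.

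By the intermediate value theorem and strict monotonicity, there is a unique $\lambda^\ast$ with $g(\lambda^\ast)=1$. Moreover $\{\lambda: g(\lambda)\le 1\}=[\lambda^\ast,\infty)$, so $\alpha=\lambda^\ast$ and $g(\alpha)=1$.

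Then I would apply Lemma \ref{lemma2.6}(2) pointwise with $\rho=|u(x)|/\alpha$ and $t=\alpha$:
$$
\zeta_0(\alpha)B\left(\frac{|u(x)|}{\alpha}\right)\le B(|u(x)|)\le \zeta_1(\alpha)B\left(\frac{|u(x)|}{\alpha}\right).
$$
Multiplying by $V(x)>0$, integrating over $\mathbb{R}^N$ and using $g(\alpha)=1$ yields $\zeta_0(\|u\|_{B,V})\le\int_{\mathbb{R}^N}V(x)B(u)\,dx\le\zeta_1(\|u\|_{B,V})$.

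The main obstacle is the first step, namely proving that the infimum defining $\|u\|_{B,V}$ is attained with modular exactly $1$. This is where the global $\Delta_2$-condition enters, through Lemma \ref{lemma2.6}, to secure finiteness and dominated convergence for all $\lambda>0$. It is also where $V_0>0$ and the strict monotonicity of $B$ are needed. The remaining step is a direct consequence of the scaling inequalities.
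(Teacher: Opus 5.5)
Your proof is correct. You show that the weighted modular equals $1$ at $\alpha=\|u\|_{B,V}$, with finiteness, continuity and the limits of $g$ all coming from the scaling bounds of Lemma~\ref{lemma2.6}(2), that is, from the global $\Delta_2$-condition; the pointwise inequality with $\rho=|u|/\alpha$, $t=\alpha$ then integrates directly to the claim. The paper gives no proof of its own and only cites \cite{Liu-Shibo2019}. Your argument is the standard one, namely the weighted version of the argument behind Lemma~\ref{lemma2.7} from \cite{Fukagai2006}.
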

\vskip2mm
\par
We  point out certain useful properties regarding the norms on Orlicz-Sobolev space $X$ .
\noindent
\begin{lemma}\label{lemma2.8}
On $X$ the norms
$$
         \|u\|_{1,B}
  =    \|\nabla u\|_{B}+\|u\|_{B,V},
$$
$$
       \|u\|_{2,B}
  =    \max\left\{\|\nabla u\|_{B},\|u\|_{B,V}\right\}
$$
and
\begin{align*}
& \|u\|_{3,B}
  =    \inf\left\{\alpha_{1}>0\Big|
  \int_{\mathbb{R}^{N}} \left(B\left(\frac{|\nabla u|}{\alpha_{1}}\right)+V(x)B\left(\frac{|u|}{\alpha_{1}}\right)\right)dx\le 1\right\},
\end{align*}
are equivalent. More precisely, for every $u \in X$ we have
$$
  \|u\|_{3,B}\leq 2\|u\|_{2,B}\leq 2\|u\|_{1,B}\leq 4 \|u\|_{3,B}.
$$
\end{lemma}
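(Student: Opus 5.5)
We must prove the last Lemma: $\|u\|_{3,B}\le 2\|u\|_{2,B}\le 2\|u\|_{1,B}\le 4\|u\|_{3,B}$ for every $u\in X$. The middle inequality is immediate, since $\max\{a,b\}\le a+b$ for $a,b\ge 0$. For the other two we use only the definitions of the Luxemburg-type norms together with the convexity of $B$ and $B(0)=0$. The key elementary fact is that for an $N$-function, $B(t/2)\le \frac12 B(t)$ and more generally $B(\theta t)\le \theta B(t)$ for $\theta\in[0,1]$. We will also use that the infima defining $\|\nabla u\|_B$ and $\|u\|_{B,V}$ are attained in the limiting sense: for any $\alpha>\|\nabla u\|_B$ we have $\int B(|\nabla u|/\alpha)\le 1$, and for any $\alpha>\|u\|_{B,V}$ we have $\int V B(|u|/\alpha)\le1$. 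This holds by monotonicity of $B$ and the definition of the infimum. The case $u=0$ is trivial.

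\emph{First inequality.} Let $\alpha>\|u\|_{2,B}$. Then $\alpha>\|\nabla u\|_B$ and $\alpha>\|u\|_{B,V}$, so both $\int B(|\nabla u|/\alpha)\,dx\le 1$ and $\int V(x)B(|u|/\alpha)\,dx\le 1$. By convexity,
\[
\int_{\mathbb{R}^N}\Big(B\big(\tfrac{|\nabla u|}{2\alpha}\big)+V(x)B\big(\tfrac{|u|}{2\alpha}\big)\Big)dx\le \tfrac12(1+1)=1,
\]
hence $\|u\|_{3,B}\le 2\alpha$. Letting $\alpha\downarrow\|u\|_{2,B}$ gives $\|u\|_{3,B}\le 2\|u\|_{2,B}$.

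\emph{Last inequality.} Let $\alpha>\|u\|_{3,B}$, so that
\[
\int_{\mathbb{R}^N}\Big(B\big(\tfrac{|\nabla u|}{\alpha}\big)+V(x)B\big(\tfrac{|u|}{\alpha}\big)\Big)dx\le1 .
\]
Since $V>0$ and $B\ge0$, each of the two integrals is $\le1$ separately. Hence $\|\nabla u\|_B\le\alpha$ and $\|u\|_{B,V}\le\alpha$, so $\|u\|_{1,B}\le 2\alpha$. Letting $\alpha\downarrow\|u\|_{3,B}$ gives $2\|u\|_{1,B}\le 4\|u\|_{3,B}$.

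These inequalities show that the three quantities are mutually equivalent. Since $\|\cdot\|_{1,B}$ is a norm by assumption, and $\|\cdot\|_{2,B}$ is clearly a norm as a maximum of seminorms that is bounded below by a multiple of $\|\cdot\|_{1,B}$, it only remains to see that $\|\cdot\|_{3,B}$ is a norm. This is the standard Luxemburg argument on the product modular $\rho(u)=\int (B(|\nabla u|)+VB(|u|))$. This modular is convex and even, and it vanishes only at $u=0$. The triangle inequality then follows from convexity: if $\rho(u/a)\le1$ and $\rho(w/b)\le1$, then $\rho\big((u+w)/(a+b)\big)\le \frac{a}{a+b}\rho(u/a)+\frac{b}{a+b}\rho(w/b)\le 1$. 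There is no real obstacle here. The only point needing care is the limiting passage $\alpha\downarrow$ norm, which is handled by taking $\alpha$ strictly larger than the norm throughout.
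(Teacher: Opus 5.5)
Your proof is correct and complete. The middle inequality is $\max\{a,b\}\le a+b$. The first follows from $B(t/2)\le \tfrac12 B(t)$ (convexity together with $B(0)=0$), applied to both modular terms with $\alpha>\|u\|_{2,B}$. The last follows because, when $\alpha>\|u\|_{3,B}$, each of the two nonnegative terms of the combined modular is at most $1$. Your verification that $\|\cdot\|_{3,B}$ is itself a norm is a useful addition, since the lemma takes this for granted.

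The paper states this lemma without any proof, so there is no argument of the authors to compare with. Your last step in fact gives the slightly sharper two-sided bound $\|u\|_{2,B}\le\|u\|_{3,B}\le 2\|u\|_{2,B}$, from which the stated chain follows at once.
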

\par
In the following we give some preliminaries  from nonsmooth critical point theory.
\par
Let $X$ be a real Banach space and $X^*$ the dual space of $X$.
We denote by $\langle\cdot,\cdot\rangle$ the duality pairing between $X^*$ and $X$.
A functional $f:X\rightarrow R$ is called locally Lipschitz if for each $u\in X$ there exist a neighborhood $U$ of $u$ and a constant $L>0$ such that
$$
|f(v)-f(w)|\leq L\|v-w\|,\;\forall\;v,w \in U.
$$
For any $u,v \in X$, we define the generalized directional derivative $f^{0}(u,v)$ of $f$ at point $u$ along the direction $v$ as
$$
f^{0}(u,v)=\limsup_{h\rightarrow 0,\lambda\downarrow 0}\frac{1}{\lambda}\left[f(u+h+\lambda v)-f(u+h)\right].
$$
The generalized gradient of the function $f$ at $u$, denoted by $\partial f(u)$, is the set
$$
\partial f(u)=\left\{w \in X^*:\; \langle w,v\rangle\leq f^{0}(u,v), \;\forall\;v \in X\right\}.
$$
Please see \cite{1981Chang} and \cite{Chang1986} for the properties of the generalized directional derivative and the generalized gradient.
\par
Set
$$
\lambda(u)=\min_{w\in \partial f(u)}\|w\|.
$$
If $\lambda(u)=0$, then $u$ is called a critical point of $f$. We shall use the following notations.
$$
K:=\{u\in X:\;\lambda(u)=0\},\;f_{c}:=\{u\in X:\;f(u)\leq c\},\;N_{\delta}(K):=\{u\in X:\;d(u,K)<\delta\}.
$$
\noindent
\begin{definition}\cite{WuXian2021}
Let $X$ be a reflexive Banach space and $f:X\rightarrow R$ a locally Lipschitz functional. We say that $f$ satisfies the condition (PS) if any sequence ${u_{n}}\subset X$ with $\lambda(u)\rightarrow 0$ and $\{f(u_{n})\}$ is bounded possesses a convergent subsequence.
\end{definition}
\par
\noindent
\begin{remark}
It is pointed in  \cite{Marano2002}  that $f$ satisfies the condition (PS) if and only if for any sequence ${u_{n}}\subset X$ if
$$
f^{0}(u_{n};v-u_{n})\geq -\varepsilon_{n}\|v-u_{n}\|,\;\forall\;v \in X,
$$
and ${f(u_{n})}$ is bounded, then ${u_{n}}$ possesses a convergent subsequence, where $\varepsilon_{n}\rightarrow 0^{+}$.
\end{remark}

\begin{lemma}\cite{WuXian2021}
Let $X$ be a reflexive Banach space with direct sum decomposition $X=X_{1}\bigoplus X_{2}$ and $0<dim X_{1}<+\infty$. Let $J:X\rightarrow R$ be a locally Lipschitz functional and bounded from below.
If there exists a $R>0$ and $r\in \mathbb{R}$ such that
$$
\alpha:=\max_{x\in S_{R}\cap X_{1}} J(x)<\beta:=\inf_{x\in X_{2}}J(x),\;\;\max_{x\in\overline{B}_{R}\cap X_{1}}J(x)<r
$$
and  $J$ satisfies the condition (PS) in $J^{-1}([c,r))$, then $J$ has at least three critical points, where $c:=\inf_{x\in X}J(x)$.
\end{lemma}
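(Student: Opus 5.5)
The plan is to produce the three critical points in three stages: a global minimizer at level $c$, a linking critical point at a level $c_1\in[\beta,r)$, and a third point obtained by contradiction through a deformation argument. Throughout, all deformations stay inside $J^{-1}([c,r))$, where the condition (PS) is available, and we use the nonsmooth deformation lemma for locally Lipschitz functionals in the form of Chang \cite{1981Chang}.

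\textbf{Step 1: the minimizer.} Since $J$ is bounded from below, $c=\inf_X J$ is finite. Moreover
$$
c\le \max_{\overline B_R\cap X_1}J<r .
$$
Ekeland's variational principle gives a minimizing sequence $(u_n)$ with $J(u_n)\to c$ and $\lambda(u_n)\to 0$. Eventually $J(u_n)<r$, so (PS) on $J^{-1}([c,r))$ yields a convergent subsequence $u_n\to u_0$. By continuity of $J$ and lower semicontinuity of $\lambda$ (from upper semicontinuity of $\partial J$), $u_0$ is a critical point with $J(u_0)=c$. Note that $c\le\alpha<\beta$, since the minimum of $J$ is at most its value at any point of $S_R\cap X_1$.

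\textbf{Step 2: the linking level.} Let
$$
\Gamma=\{\gamma\in C(\overline B_R\cap X_1,X):\ \gamma|_{S_R\cap X_1}=\mathrm{id}\},\qquad
c_1=\inf_{\gamma\in\Gamma}\max_{\overline B_R\cap X_1}J\circ\gamma .
$$
Let $P$ be the projection onto $X_1$ along $X_2$. For $\gamma\in\Gamma$, a Brouwer degree argument applied to $P\circ\gamma$ on the finite dimensional ball $\overline B_R\cap X_1$ gives $\gamma(\overline B_R\cap X_1)\cap X_2\neq\emptyset$. Hence $c_1\ge\beta>\alpha$. Taking $\gamma=\mathrm{id}$ gives $c_1\le\max_{\overline B_R\cap X_1}J<r$.

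Now suppose $c_1$ were not a critical value. Then (PS) at $c_1\in[c,r)$ and the deformation lemma give $\varepsilon>0$ with $\varepsilon<c_1-\alpha$ and a deformation $\eta$ mapping $J^{c_1+\varepsilon}$ into $J^{c_1-\varepsilon}$. This $\eta$ can be chosen to fix $J^{c_1-2\varepsilon}\supset J^{\alpha}\supset S_R\cap X_1$. Then $\eta\circ\gamma\in\Gamma$ for a nearly optimal $\gamma$, and $\max J\circ\eta\circ\gamma\le c_1-\varepsilon$, which contradicts the definition of $c_1$. So there is a critical point $u_1$ with $J(u_1)=c_1\ge\beta>c$, and in particular $u_1\neq u_0$.

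\textbf{Step 3: the third point, which I expect to be the main obstacle.} Suppose $K=\{u_0,u_1\}$ only; otherwise we are done. Then:
\begin{itemize}
\item $K\cap J^{-1}([c,r))$ consists only of $u_0$ at level $c$ and $u_1$ at level $c_1$.
\item There is no critical value in $(c,c_1)$.
\item The critical set at level $c$ is the single point $\{u_0\}$.
\end{itemize}
Using (PS) on $J^{-1}([c,r))$, the second deformation lemma should give a deformation retraction of $J^{c_1-\varepsilon}$ into an arbitrarily small ball around $u_0$. This is where care is needed in the locally Lipschitz setting: one builds a pseudo-gradient flow away from $N_\delta(\{u_0\})$ and uses that $\lambda$ is bounded below by a positive constant on $J^{-1}([c,c_1-\varepsilon])\setminus N_\delta(\{u_0\})$.

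Consequently the sphere $S_R\cap X_1\subset J^{\alpha}\subset J^{c_1-\varepsilon}$ is contractible inside $J^{c_1-\varepsilon}$: deform it into a ball around $u_0$, then contract it radially within that ball, which lies in $J^{c_1-\varepsilon}$ when the ball is small, by continuity of $J$ at $u_0$ and $c<c_1$. This contraction defines an extension $\gamma:\overline B_R\cap X_1\to J^{c_1-\varepsilon}$ equal to the identity on $S_R\cap X_1$. Then $\gamma\in\Gamma$ and $\max J\circ\gamma\le c_1-\varepsilon<c_1$, a contradiction. Hence $J$ has at least three critical points.

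The delicate point is precisely the nonsmooth deformation onto a neighbourhood of the isolated minimum. If dimension issues arise, I would first try the case $\dim X_1=1$. There $S_R\cap X_1=\{\pm Re\}$, and the argument reduces to a mountain pass between two points of $J^{\alpha}$ lying in different path components of $J^{c_1-\varepsilon}$.
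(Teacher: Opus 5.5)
The paper gives no proof of this lemma. It is quoted from \cite{WuXian2021} and only applied later, so there is no in-text argument to compare yours with. Judged on its own, your scheme is the right one, and Steps 1 and 2 are sound. The scheme has three stages: a global minimizer $u_0$ at level $c$, a linking level $c_1\in[\beta,r)$, and a contradiction showing that $S_R\cap X_1$ would be null-homotopic in $J^{b}$ for some $b\in(\alpha,c_1)$.

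A few small repairs:
\begin{itemize}
\item In Step 1 you do not need semicontinuity of $\lambda$, since a global minimizer satisfies $0\in\partial J(u_0)$ directly.
\item In Step 2 the set left fixed by Chang's deformation \cite{1981Chang} is $\{J<c_1-\bar\varepsilon\}$, so the condition you need is $\bar\varepsilon<c_1-\alpha$. Your inclusion $J^{c_1-2\varepsilon}\supset J^{\alpha}$ needs $2\varepsilon\le c_1-\alpha$, not $\varepsilon<c_1-\alpha$.
\item Also in Step 2, only (PS)$_{c_1}$ is used, and it holds because $c<c_1<r$.
\item The degree argument needs $P$ continuous, i.e. $X_2$ closed. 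This costs nothing: replacing $X_2$ by $\overline{X_2}$ leaves $\beta$ unchanged, and $\alpha<\beta$ then forces $X_1\cap\overline{X_2}=\{0\}$.
\end{itemize}

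The step that does not close as written is Step 3. A positive lower bound for $\lambda$ on $J^{-1}([c,b])\setminus N_\delta(\{u_0\})$ only shows that every pseudo-gradient trajectory \emph{visits} $N_\delta(\{u_0\})$ within a uniform time. It says nothing about where the trajectory is at the final time. It can leave $N_\delta$ again, and first-entrance times are not continuous, so you cannot stop the flow there.

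What is missing is localization of low sublevels: for every $\rho>0$ there is $a\in(c,b)$ with $J^{a}\subset B_\rho(u_0)$. This follows from Ekeland and (PS). Suppose $J(w_n)\to c$ and $\|w_n-u_0\|\ge\rho$. Ekeland gives $z_n$ with $J(z_n)\le J(w_n)$, $\|z_n-w_n\|\le\sqrt{J(w_n)-c}$ and $\lambda(z_n)\le\sqrt{J(w_n)-c}$. Then (PS) gives $z_n\to z$ with $J(z)=c$, so $z$ is a global minimizer. Under your assumption $K=\{u_0,u_1\}$ this forces $z=u_0$, contradicting $\|z_n-u_0\|\ge\rho-o(1)$.

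With this you do not need a second deformation lemma at all:
\begin{itemize}
\item Take $\rho$ with $B_\rho(u_0)\subset\{J<b\}$, then $a$ as above.
\item Since $c<a\le b<c_1$, the strip $J^{-1}([a,b])\subset J^{-1}([c,r))$ contains no critical points. So (PS) and lower semicontinuity of $\lambda$ give $\lambda\ge\sigma>0$ there, after slightly enlarging the strip inside $(c,c_1)$.
\item The quantitative nonsmooth deformation lemma then gives a homotopy $\eta$ inside $J^{b}$, starting at the identity, with $\eta(1,J^{b})\subset J^{a}\subset B_\rho(u_0)$.
\item Follow $\eta$ on $S_R\cap X_1$, then contract along straight lines to $u_0$ inside the convex set $B_\rho(u_0)\subset J^{b}$.
\end{itemize}
This gives your cone map $\gamma\in\Gamma$ with $\max J\circ\gamma\le b<c_1$, the desired contradiction. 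Alternatively, you can quote the second deformation theorem for continuous functionals in the weak-slope framework, which applies because $|dJ|\ge\lambda$ for locally Lipschitz $J$. With this addition the proof is complete, and the separate discussion of $\dim X_1=1$ is unnecessary.
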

\par
\noindent

 \allowdisplaybreaks
\section{Main results and proofs}
Let $W_{1}$ denote the subspace of $W^{1,\Phi_1}(\mathbb{R}^{N})$ which is defined by
\begin{eqnarray*}
  W_{1}=\left\{u\in W^{1,\Phi_1}(\mathbb{R}^{N})\Big| \int_{\mathbb{R}^N}V_{1}(x)\Phi_{1}(|u|)dx<\infty\right\}
\end{eqnarray*}
endowed with the norm
\begin{eqnarray*}
  \|u\|_{1,\Phi_1}=\|\nabla u\|_{\Phi_1}+\|u\|_{\Phi_1,V_{1}},
\end{eqnarray*}
where
$$
  \|u\|_{\Phi_1,V_{1}}=\inf\left\{\alpha_{1}>0\Big|\int_{\mathbb{R}^N} V_{1}(x)\Phi_1\left(\frac{|u|}{\alpha_{1}}\right)dx\le 1\right\},
$$
and $W_{2}$ denote the subspace of $W^{1,\Phi_2}(\mathbb{R}^{N})$ which is defined by
\begin{eqnarray*}
  W_{2}=\left\{v\in W^{1,\Phi_2}(\mathbb{R}^{N})\Big| \int_{\mathbb{R}^N}V_{2}(x)\Phi_{2}(|v|)dx<\infty\right\}
\end{eqnarray*}
endowed with the norm
\begin{eqnarray*}
  \|v\|_{1,\Phi_2}=\|\nabla v\|_{\Phi_2}+\|v\|_{\Phi_2,V_{2}},
\end{eqnarray*}
where
$$
  \|v\|_{\Phi_2,V_{2}}
=
  \inf\left\{\alpha_{2}>0\Big|\int_{\mathbb{R}^N} V_{2}(x)\Phi_2\left(\frac{|v|}{\alpha_{2}}\right)dx\le 1\right\}.
$$
Then $W_{1}$ and $W_{2}$  are reflexive and separable Banach space.
In this paper, we consider the subspace $W=:W_{1}\times W_{2}$ of  Orlicz-Sobolev space $W^{1,\Phi_1}(\mathbb{R}^{N})\times W^{1,\Phi_2}(\mathbb{R}^{N})$ endowed with the norm
\begin{eqnarray}\label{d5-1}
        \|(u,v)\|
  =    \|u\|_{1,\Phi_1}+ \|v\|_{1,\Phi_2}
  =    \|\nabla u\|_{\Phi_1}+\|u\|_{\Phi_1,V_1}
     +\|\nabla v\|_{\Phi_2}+\|v\|_{\Phi_2,V_2}.
\end{eqnarray}
It is easy to see that  $(W,\|\cdot\|)$ is a separable and reflexive Banach space.
\par
As mentioned in Section 1, we will use the variational methods to find solutions to system \eqref{eq5-0}.
Therefore,
let the energy functional
 $I:W\rightarrow \mathbb{R}$ associated with system \eqref{eq5-0} is given by
 \begin{align}\label{d5-2}
&   I(u,v)
=   \int_{\mathbb{R}^{N}}\Phi_1(|\nabla u|)dx
  + \int_{\mathbb{R}^{N}}\Phi_2(|\nabla v|)dx
  + \int_{\mathbb{R}^{N}}(V_{1}(x)-\lambda_{1})\Phi_1(| u|)dx
  + \int_{\mathbb{R}^{N}}(V_{2}(x)-\lambda_{2})\Phi_2(| v|)dx
          \nonumber\\
&\qquad\qquad
  + \int_{\mathbb{R}^{N}}H(x,u,v)dx
  - \int_{\mathbb{R}^{N}}f(x)(u+v)dx,\quad (u,v)\in W.
\end{align}
For simplicity's sake, we use $J_{0}: W\rightarrow \mathbb{R}$ for
\begin{align}\label{d5-3}
&            J_{0}(u,v):
=           \int_{\mathbb{R}^{N}}\Phi_1(|\nabla u|)dx
          + \int_{\mathbb{R}^{N}}\Phi_2(|\nabla v|)dx
          + \int_{\mathbb{R}^{N}}(V_{1}(x)-\lambda_{1})\Phi_1(| u|)dx
          \nonumber\\
&\qquad\qquad\quad
          + \int_{\mathbb{R}^{N}}(V_{2}(x)-\lambda_{2})\Phi_2(| v|)dx
          - \int_{\mathbb{R}^{N}}f(x)(u+v)dx,
\end{align}
and $J_{1}: L^{p}(\mathbb{R}^{N})\times L^{p}(\mathbb{R}^{N})\rightarrow \mathbb{R}$ for
\begin{align}\label{d5-4}
J_{1}(u,v):=\int_{\mathbb{R}^{N}}H(x,u,v)dx,
\end{align}
which is well defined.
It is well known that $J_{0}\in C^{1}(W, \mathbb{R})$ with
\begin{align}\label{d5-5}
&            \langle J_{0}'(u,v), (\varphi,\psi)\rangle:
=           \int_{\mathbb{R}^{N}}\phi_1(|\nabla u|)\nabla u \nabla \varphi dx
          + \int_{\mathbb{R}^{N}}(V_{1}(x)-\lambda_{1})\phi_1(| u|)u \varphi dx
          + \int_{\mathbb{R}^{N}}\phi_2(|\nabla v|)\nabla v\nabla \psi dx
          \nonumber\\
&\qquad\qquad\qquad\qquad
          + \int_{\mathbb{R}^{N}}(V_{2}(x)-\lambda_{2})\phi_2(| v|)v \psi dx
          - \int_{\mathbb{R}^{N}}f(x)(\varphi+\psi)dx, \;(u,v),(\varphi,\psi)\in W.
\end{align}
Regarding the functional $J_{1}$, we can show using standard arguments that:

\begin{lemma}\label{}
Assume that $(H_0)$-$(H_2)$ hold. Considering $J_{1}$ on space $L^{p}(\mathbb{R}^{N})\times L^{p}(\mathbb{R}^{N})$, we have
\begin{itemize}
\item[(1)]
the functional $J_{1}: L^{p}(\mathbb{R}^{N})\times L^{p}(\mathbb{R}^{N})\rightarrow \mathbb{R}$ is locally Lipschitz in $L^{p}(\mathbb{R}^{N})\times L^{p}(\mathbb{R}^{N})$;
\item[(2)]
$\partial J_{1}(u,v) \subset [h^{-}(x,u,v),h^{+}(x,u,v)]\times [k^{-}(x,u,v),k^{+}(x,u,v)]$, almost everywhere $x\in \mathbb{R}^{N}$
   and $(u,v)\in L^{p}(\mathbb{R}^{N})\times L^{p}(\mathbb{R}^{N})$.
\end{itemize}
\end{lemma}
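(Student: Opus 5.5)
We prove both parts by following Chang's classical treatment of integral functionals with discontinuous integrands (see \cite{1981Chang,Chang1986}), adapted to the product space $L^{p}(\mathbb{R}^{N})\times L^{p}(\mathbb{R}^{N})$ and to the two-variable primitive in $(H_0)$.

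\emph{Part (1).} Fix $(u_1,v_1),(u_2,v_2)$ in a ball of radius $R$ in $L^p\times L^p$. Split the difference of the primitives using the two representations in $(H_0)$:
\[
H(x,u_2,v_2)-H(x,u_1,v_1)=\bigl[H(x,u_2,v_2)-H(x,u_1,v_2)\bigr]+\bigl[H(x,u_1,v_2)-H(x,u_1,v_1)\bigr].
\]
Each bracket is an integral of $h$ (respectively $k$) along a segment in one variable with the other variable frozen. For the second bracket, $\int_0^{u}h(x,t,0)\,dt$ cancels and only $\int_{v_1}^{v_2}k(x,u_1,s)\,ds$ survives. By $(H_1)$ the pointwise bound is
\[
|H(x,u_2,v_2)-H(x,u_1,v_1)|\le C\bigl(|u_1|^{p-1}+|u_2|^{p-1}+|v_1|^{p-1}+|v_2|^{p-1}\bigr)\bigl(|u_2-u_1|+|v_2-v_1|\bigr).
\]
Integrating and applying Hölder with exponents $q=p/(p-1)$ and $p$ gives $|J_1(u_2,v_2)-J_1(u_1,v_1)|\le C R^{p-1}\|(u_2-u_1,v_2-v_1)\|_{L^p\times L^p}$. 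The same bound with $(u_1,v_1)=(0,0)$ shows $J_1$ is finite, since $|H|\le C(|u|^p+|v|^p)$. This establishes local Lipschitz continuity.

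\emph{Part (2).} Take $w=(w_1,w_2)\in\partial J_1(u,v)\subset L^{q}\times L^{q}$. First compute the generalized directional derivative in a direction $(\varphi,0)$. Write the difference quotient via $(H_0)$ as $\frac1\lambda\int_{\mathbb{R}^N}\int_{u+z_1}^{u+z_1+\lambda\varphi}h(x,t,v+z_2)\,dt\,dx$, with $(z_1,z_2)\to0$ in $L^p$. Passing to a subsequence, $z\to0$ a.e. with an $L^p$ dominating function. Pointwise, the inner average is bounded above by $\varphi\,h^{+}(x,u,v)$ when $\varphi\ge0$ and by $\varphi\,h^{-}(x,u,v)$ when $\varphi<0$, in the limsup, by the definition of $h^{\pm}$ as limits of ess sup/ess inf over shrinking neighbourhoods. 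The growth bound $(H_1)$ supplies the domination needed for (reverse) Fatou. This yields
\[
J_1^{0}\bigl((u,v);(\varphi,0)\bigr)\le\int_{\mathbb{R}^N}\max\{h^{-}\varphi,\;h^{+}\varphi\}\,dx,
\]
and similarly for $(0,\psi)$ with $k^{\pm}$.

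Since $\langle w,(\varphi,0)\rangle\le J_1^0$ for all $\varphi$, choose $\varphi=\chi_E$ and then $\varphi=-\chi_E$, with $E$ an arbitrary set of finite measure. This gives $\int_E w_1\le\int_E h^{+}$ and $\int_E w_1\ge\int_E h^{-}$, so $h^{-}\le w_1\le h^{+}$ a.e. The component $w_2$ is handled the same way. Measurability of $h^{\pm}(x,u(x),v(x))$ is exactly the superposition-measurability assumed in the setting.

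\emph{Main obstacle.} The delicate step is the pointwise limsup estimate. It requires an essential-supremum argument: the integrand is only defined a.e. in $t$, and the frozen variable $v+z_2$ moves as well, so we must check that the joint neighbourhood in the definition of $h^{+}(x,u,v)$ controls $h(x,t,v+z_2(x))$ for $t$ between $u+z_1$ and $u+z_1+\lambda\varphi$. We handle this by noting that for a.e. $x$, once $|z_1|+|z_2|+\lambda|\varphi|<\delta$, the values lie in the $\delta$-box, and hence the average is at most the ess sup over that box. We then let $\delta\to0$. The consistency of the two representations in $(H_0)$ is what makes the separate one-variable treatment of $h$ and $k$ legitimate.
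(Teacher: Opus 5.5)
Your proposal is correct and follows essentially the paper's route: reverse Fatou with $(H_1)$-domination along an a.e.-convergent, $L^p$-dominated subsequence to move the limsup inside the integral, a pointwise bound of the one-variable difference quotients by $h^{\pm}$ and $k^{\pm}$ using the two representations in $(H_0)$, and testing against $\pm\chi_E$. The one real difference is that you give an explicit local Lipschitz estimate, which the paper omits.

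One caveat concerns the step you flag as delicate. The segment $\{(t,v(x)+z_2(x))\}$ is a null set in $\mathbb{R}^2$, so lying in the $\delta$-box does not by itself bound its line integral by the two-dimensional essential supremum that defines $h^{+}$. You need Fubini to get the bound for almost every frozen second coordinate. Then use continuity of $b\mapsto H(x,a,b)$, which comes from the second representation in $(H_0)$, to reach every value. The paper's proof also takes this step without comment.
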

The above-mentioned result is similar to Theorem 4.1 in \cite{AlvesCO2014baohan} and Lemma 11 in \cite{Santos2023}, it establishes an important relation between the generalized gradients $\partial J_{1}(u,v)$ and $[h^{-}(x,u,v),h^{+}(x,u,v)]\times [k^{-}(x,u,v),k^{+}(x,u,v)]$,
and its proof follows the same steps, then we will omit its proof.

 Under the hypothesis $(H_1)$, there is $\widehat{C}_0>0$ such that
 \begin{align}\label{d5-6}
          H(x,u,v)
\leq      \widehat{C}_0 |u|^{p} + \widehat{C}_0|v|^{p}, \;\;\forall\;(u,v) \in \mathbb{R}\times \mathbb{R}\;\;\text{and}\;\;
                             x\in \mathbb{R}^{N}.
 \end{align}
 In addition, under the hypothesis $(H_3)$, for each $\varepsilon>0$, there exists  $c=c(\varepsilon)>0$ such that
  \begin{align}\label{d5-7}
          H(x,u,v)
\geq      \frac{c}{2} |u|^{p} + \frac{c}{2} |v|^{p}-\varepsilon(|u|^{2}+|v|^{2}), \;\;\text{a.e.\;in}\;(u,v) \in \mathbb{R}\times \mathbb{R} \;\;\text{and}\;\;
                             x\in \mathbb{R}^{N}.
 \end{align}
 From \eqref{d5-7} and $(H_4)$, there are $\widehat{C}_1>0$ such that
  \begin{align}\label{d5-8}
          H(x,u,v)
\geq      \widehat{C}_1  |u|^{p} + \widehat{C}_1|v|^{p}, \;\;\text{a.e.\;in}\;(u,v) \in \mathbb{R}\times \mathbb{R}\;\;\text{and}\;\;
                             x\in \mathbb{R}^{N}.
 \end{align}

We are going to justify that the inclusion below holds
\begin{align}\label{d5-9}
\partial J_{1}(u,v)\subset \left[h^{-}(x,u,v),h^{+}(x,u,v)\right]\times \left[k^{-}(x,u,v),k^{+}(x,u,v)\right]\;\;\text{a.e\;in}\;\mathbb{R}^{N},
\end{align}
where $(u,v)\in L^{p}(\mathbb{R}^{N})\times L^{p}(\mathbb{R}^{N})$.
This inclusion \eqref{d5-9} means that given
$(\rho_{1},\xi_{1})\in \partial J_{1}(u,v)\subset \left(L^{p}(\mathbb{R}^{N})\times L^{p}(\mathbb{R}^{N})\right)^{\ast}\approx L^{p'}(\mathbb{R}^{N})\times L^{p'}(\mathbb{R}^{N})$,
$\frac{1}{p}+\frac{1}{p'}=1$, there is
$(\widetilde{\rho}_{1},\widetilde{\xi}_{1})\in L^{p'}(\mathbb{R}^{N})\times L^{p'}(\mathbb{R}^{N})$ such that
\begin{align*}
\langle (\rho_{1},\xi_{1}),(u,v)\rangle
=
\int_{\mathbb{R}^{N}}\widetilde{\rho}_{1}udx+\int_{\mathbb{R}^{N}}\widetilde{\xi}_{1}vdx,\;\;\;
\forall\;(u,v)\in L^{p}(\mathbb{R}^{N})\times L^{p}(\mathbb{R}^{N}),
\end{align*}
where
\begin{align*}
(\widetilde{\rho}_{1},\widetilde{\xi}_{1})\in \left[h^{-}(x,u,v),h^{+}(x,u,v)\right]\times \left[k^{-}(x,u,v),k^{+}(x,u,v)\right]\;\;\text{a.e\;in}\;\mathbb{R}^{N}.
\end{align*}

In order to prove the inclusion mentioned above we will need some lemmas. First of all, by $(H_1)$, it is not difficult to verify that $H(x,\cdot,\cdot)\in \text{Lip}_{\text{loc}}(\mathbb{R}\times \mathbb{R},\mathbb{R})$ for all $x\in \mathbb{R}^{N}$. Moreover, note also that, for $(t,s)\in \mathbb{R}\times \mathbb{R}$, given
$(r_{1},h_{1}),\;(r_{2},h_{2})\in \mathbb{R}\times \mathbb{R}$ and $\gamma>0$
a direct computation gives
\begin{align*}
H^{\circ}(x,(t,s);(r_{1},h_{1}))
&\leq
\limsup\limits_{(r_{2},h_{2})\rightarrow 0,\;\gamma\downarrow 0}
\frac{1}{\gamma}\left(\int_{0}^{t+\gamma r_{1}+r_{2}}h(x,\tau,s+\gamma h_{1}+h_{2})d\tau
-\int_{0}^{t+r_{2}}h(x,\tau,s+h_{2})d\tau\right)
          \nonumber\\
&+
\limsup\limits_{(r_{2},h_{2})\rightarrow 0,\;\gamma\downarrow 0}
\frac{1}{\gamma}\left(\int_{0}^{s+\gamma h_{1}+h_{2}}k(x,0,\tau)d\tau
-\int_{0}^{s+h_{2}}k(x,0,\tau)d\tau\right)
\end{align*}
and
\begin{align*}
H^{\circ}(x,(t,s);(r_{1},h_{1}))
&\leq
\limsup\limits_{(r_{2},h_{2})\rightarrow 0,\;\gamma\downarrow 0}
\frac{1}{\gamma}\left(\int_{0}^{s+\gamma h_{1}+h_{2}}k(x,t+\gamma r_{1}+r_{2},\tau)d\tau
-\int_{0}^{s+h_{2}}k(x,t+r_{2},\tau)d\tau\right)
          \nonumber\\
&+
\limsup\limits_{(r_{2},h_{2})\rightarrow 0,\;\gamma\downarrow 0}
\frac{1}{\gamma}\left(\int_{0}^{t+\gamma r_{1}+r_{2}}h(x,\tau,0)d\tau
-\int_{0}^{t+r_{2}}h(x,\tau,0)d\tau\right).
\end{align*}
\par
\begin{lemma}\label{lemma3.1}
Assume that $(H_1)$ and $(H_2)$ hold. Then
\begin{align*}
\partial H(x,u,v)\subset \left[h^{-}(x,u,v),h^{+}(x,u,v)\right]\times \left[k^{-}(x,u,v),k^{+}(x,u,v)\right],
\;\;\;\forall\;u,v\in \mathbb{R},\;\;\forall\;x\in \mathbb{R}^{N}.
\end{align*}
\end{lemma}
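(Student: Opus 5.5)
Fix $x\in\mathbb{R}^N$ and suppress it. The plan is to bound the Clarke directional derivative of $H$ at $(u,v)$ coordinatewise, and then use the fact that $\partial H(u,v)$ is the set of $(\xi_1,\xi_2)$ with $\xi_1 r+\xi_2 s\le H^{\circ}((u,v);(r,s))$ for all $(r,s)$. It suffices to show
\begin{align*}
H^{\circ}((u,v);(r,0))\le \max\{h^{-}(u,v)r,\;h^{+}(u,v)r\},\qquad
H^{\circ}((u,v);(0,s))\le \max\{k^{-}(u,v)s,\;k^{+}(u,v)s\}.
\end{align*}
Testing the defining inequality with $(\pm1,0)$ then gives $h^-\le\xi_1\le h^+$, and testing with $(0,\pm1)$ gives $k^-\le\xi_2\le k^+$. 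This is exactly the claimed inclusion. Local Lipschitz continuity of $H$, which makes $H^{\circ}$ finite and $\partial H$ well defined, follows from $(H_1)$ as already observed before the lemma.

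For the $u$-direction, use the first representation in $(H_0)$ (equivalently the first displayed estimate with $h_1=0$). Take $r_2,h_2\to0$ and $\gamma\downarrow0$, and fix the direction $(r,0)$. The $k(x,0,\cdot)$ integrals have identical upper limits $s+h_2$ and cancel. What remains is
$$\frac1\gamma\int_{u+r_2}^{u+r_2+\gamma r}h(\tau,v+h_2)\,d\tau .$$
The point $(\tau,v+h_2)$ lies within distance $\delta$ of $(u,v)$ once $|r_2|,|h_2|,\gamma|r|$ are small. If the ess sup/ess inf were pointwise bounds, this quotient would be at most $\sup\{h\text{ near }(u,v)\}\,r$ when $r>0$, and at most $\inf\{\cdots\}\,r$ when $r<0$. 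Letting $\delta\to0$ yields $h^+r$ or $h^-r$.

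The $v$-direction is handled symmetrically with the second representation in $(H_0)$, taking $r_1=0$. The $h(x,\cdot,0)$ terms cancel, leaving
$$\frac1\gamma\int_{v+h_2}^{v+h_2+\gamma s}k(u+r_2,\tau)\,d\tau ,$$
which is bounded by $k^{\pm}s$ in the same way. Here $(H_2)$, or just local boundedness, guarantees that all these quantities are finite.

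\textbf{Main obstacle.} The definitions of $h^{\pm}$ use essential sup/inf over a two-dimensional neighborhood, but the integrals above run along one-dimensional lines $\{(\tau,v+h_2)\}$, which have planar measure zero. A single line may therefore see values of $h$ exceeding the essential supremum.

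To get around this, first average over the perturbation $h_2$. The difference quotient is Lipschitz-controlled in $(r_2,h_2)$, and the limsup over $h_2\to0$ dominates averages over small boxes. Replace the line integral by
$$\frac{1}{\gamma\eta}\int_{|h_2'|<\eta}\int_{u+r_2}^{u+r_2+\gamma r}h(\tau,v+h_2')\,d\tau\,dh_2' ,$$
a genuine two-dimensional integral over a set inside the $\delta$-neighborhood. It is bounded by $\operatorname{ess\,sup}\cdot r$ for $r>0$, and symmetrically by $\operatorname{ess\,inf}\cdot r$ for $r<0$.

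The error between the line quotient and this average must then be controlled. I would try to bound it using the local Lipschitz continuity of $H$ in $v$ from $(H_1)$, but it is not obvious that this error vanishes faster than $\gamma$. The cleaner route is to instead use the known characterization $H^{\circ}(z;w)=\limsup_{y\to z,\,y\notin E}\nabla H(y)\cdot w$ for a null set $E$ (Clarke's gradient formula). At a.e. point the gradient is $(h,k)$ a.e. by Fubini and $(H_0)$, and its limsup over points off a null set is bounded by the essential sup over neighborhoods. I expect this identification of $\nabla H$ with $(h,k)$ a.e., using both representations in $(H_0)$, to be the delicate step.
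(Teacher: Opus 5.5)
Your proposal is correct, and its skeleton is the paper's. Like the paper, you test the defining inequality of $\partial H(x,u,v)$ with $(\pm r_1,0)$ and $(0,\pm h_1)$. You use the first representation in $(H_0)$ so that the $k(x,0,\cdot)$ terms cancel, and the second one for the $v$-direction. This reduces matters to the quotient $\frac{1}{\gamma}\int_{u+r_2}^{u+r_2+\gamma r_1}h(x,\tau,v+h_2)\,d\tau$.

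The two arguments diverge at the one step that carries content. The paper bounds this quotient by $h^{+}(x,u,v)r_1$ in a single line. In effect it treats the planar essential supremum as controlling $h$ along the horizontal lines $\{(\tau,v+h_2)\}$. That is exactly the obstacle you identify, and the paper does not address it.

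You instead pass to Clarke's gradient formula $H^{\circ}(z;w)=\limsup_{y\to z,\,y\notin E}\nabla H(y)\cdot w$, valid for any null set $E$, combined with $\nabla H=(h,k)$ a.e. This route is rigorous and treats $h$ and $k$ symmetrically. You can even read the inclusion off $\partial H(z)=\mathrm{co}\{\lim\nabla H(y_i)\}$, since the target rectangle is closed and convex. It also makes transparent where $(H_0)$ enters. The price is invoking Rademacher's theorem and Clarke's Theorem~2.5.1. The paper's route is elementary but, as written, incomplete.

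Two remarks on your write-up. First, the identification $\nabla H=(h,k)$ a.e.\ is routine, but it presupposes that $H(x,\cdot,\cdot)$ is jointly locally Lipschitz, and $(H_1)$ alone does not give this. You need the first representation in $(H_0)$ for the Lipschitz bound in $u$ and the second for the bound in $v$. (Both you and the paper use $(H_0)$, even though the lemma lists only $(H_1)$, $(H_2)$.) Given that, apply Lebesgue differentiation along each horizontal line (first representation) and each vertical line (second representation); Tonelli then shows the exceptional sets are planar null.

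Let $Q_\delta$ be the $\delta$-square around $(u,v)$ and take $\delta_n\downarrow0$. Put into $E$ the exceptional sets above, together with the null sets $\{h>\mathrm{ess\,sup}_{Q_{\delta_n}}h\}\cap Q_{\delta_n}$ and $\{h<\mathrm{ess\,inf}_{Q_{\delta_n}}h\}\cap Q_{\delta_n}$, and likewise for $k$. Then the $\limsup$ and $\liminf$ of $h$ and $k$ off $E$ are controlled by $h^{\pm}$ and $k^{\pm}$.

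Second, your averaging route also closes, and no rate in $\gamma$ is needed. For fixed $\gamma$ and $r_2$ the quotient equals $\frac{1}{\gamma}\bigl[H(u+r_2+\gamma r_1,v+h_2)-H(u+r_2,v+h_2)\bigr]$. This is continuous in $h_2$, again by the second representation. By Tonelli, the bound $\le r_1\,\mathrm{ess\,sup}_{Q_\delta}h$ (for $r_1>0$) holds for a.e.\ small $h_2$, hence by continuity for all small $h_2$. This is the cheapest repair of the paper's argument.
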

\noindent
{\bf Proof.} For each $(u,v)\in \mathbb{R} \times \mathbb{R}$,
\begin{align*}
 \partial H(x,u,v)
=\left\{(u_{1},v_{1})\in \mathbb{R} \times \mathbb{R};\;
         \langle(u_{1},v_{1}),(r_{0},h_{0})\rangle \leq H^{\circ}(x,(u,v);(r_{0},h_{0})),\;\;
         \forall\;(r_{0},h_{0})\in \mathbb{R} \times \mathbb{R}
\right\}
\end{align*}
and so, given $(u_{1},v_{1})\in \partial H(x,u,v)$,  we get
\begin{align*}
u_{1}r_{1}+v_{1}h_{1}\leq H^{\circ}(x,(u,v);(r_{1},h_{1})),\;\;
         \forall\;(r_{1},h_{1})\in \mathbb{R} \times \mathbb{R}.
\end{align*}
Assume $r_{1}>0$ and $h_{1}=0$, then
\begin{align*}
u_{1}r_{1}
&\leq
H^{\circ}(x,(u,v);(r_{1},0))
\leq
\limsup\limits_{(r_{2},h_{2})\rightarrow 0,\;\gamma\downarrow 0}
\frac{1}{\gamma}\left(\int_{0}^{u+\gamma r_{1}+r_{2}}h(x,\tau,v+h_{2})d\tau
-\int_{0}^{u+r_{2}}h(x,\tau,v+h_{2})d\tau\right)
          \nonumber\\
&=
\limsup\limits_{(r_{2},h_{2})\rightarrow 0,\;\gamma\downarrow 0}
\frac{1}{\gamma}\int_{u+r_{2}}^{u+\gamma r_{1}+r_{2}}h(x,\tau,v+h_{2})d\tau
\leq
h^{+}(x,u,v)r_{1},
\end{align*}
and consequently,
\begin{align}\label{d5-10}
u_{1}\leq h^{+}(x,u,v),\;\;
         \forall\;(u,v)\in \mathbb{R} \times \mathbb{R}\;\text{and}\;x\in \mathbb{R}^{N}.
\end{align}
On the other hand,
\begin{align*}
u_{1}(-r_{1})
&\leq
H^{\circ}(x,(u,v);((-r_{1}),0))
\leq
\limsup\limits_{(r_{2},h_{2})\rightarrow 0,\;\gamma\downarrow 0}
\frac{1}{\gamma}\left(\int_{0}^{u+\gamma (-r_{1})+r_{2}}h(x,\tau,v+h_{2})d\tau
-\int_{0}^{u+r_{2}}h(x,\tau,v+h_{2})d\tau\right)
          \nonumber\\
&=
\limsup\limits_{(r_{2},h_{2})\rightarrow 0,\;\gamma\downarrow 0}
\frac{1}{\gamma}\left(-\left[\int_{0}^{u+r_{2}}h(x,\tau,v+h_{2})d\tau
-\int_{0}^{u+\gamma (-r_{1})+r_{2}}h(x,\tau,v+h_{2})d\tau\right]\right)
          \nonumber\\
&=
\limsup\limits_{(r_{2},h_{2})\rightarrow 0,\;\gamma\downarrow 0}
\frac{1}{\gamma}\left(\int_{u+\gamma (-r_{1})+r_{2}}^{u+r_{2}}-h(x,\tau,v+h_{2})d\tau\right)
\leq
h^{-}(x,u,v)(-r_{1}),
\end{align*}
that is,
\begin{align}\label{d5-11}
u_{1}\geq h^{-}(x,u,v),\;\;
         \forall\;(u,v)\in \mathbb{R} \times \mathbb{R}\;\text{and}\;x\in \mathbb{R}^{N}.
\end{align}
From \eqref{d5-10} and \eqref{d5-11},
\begin{align*}
h^{-}(x,u,v)\leq u_{1}\leq h^{+}(x,u,v),\;\;
         \forall\;(u,v)\in \mathbb{R} \times \mathbb{R}\;\text{and}\;x\in \mathbb{R}^{N}.
\end{align*}
On the other hand, if $r_{1}=0$ and $h_{1}>0$, we find
\begin{align*}
v_{1}h_{1}
&\leq
H^{\circ}(x,(u,v);(0,h_{1}))
\leq
\limsup\limits_{(r_{2},h_{2})\rightarrow 0,\;\gamma\downarrow 0}
\frac{1}{\gamma}\left(\int_{0}^{v+\gamma h_{1}+h_{2}}k(x,u+r_{2},\tau)d\tau
-\int_{0}^{v+h_{2}}k(x,u+r_{2},\tau)d\tau\right).
\end{align*}
In an entirely analogous way, it is shown that
\begin{align*}
k^{-}(x,u,v)\leq v_{1}\leq k^{+}(x,u,v),\;\;
         \forall\;(u,v)\in \mathbb{R} \times \mathbb{R}\;\text{and}\;x\in \mathbb{R}^{N}.
\end{align*}

\par
\begin{lemma}\label{lemma3.1}
Assume that $(H_1)$ and $(H_2)$ hold. Then
\begin{align*}
J_{1}^{\circ}((u,v);(u_{1},v_{1}))\leq  \int_{\mathbb{R}^{N}}H^{\circ}(x,(u,v);(u_{1},v_{1}))dx\;\;\; \forall\; (u,v),(u_{1},v_{1})\in L^{p}(\mathbb{R}^{N})\times L^{p}(\mathbb{R}^{N}).
\end{align*}
\end{lemma}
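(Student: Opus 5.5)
This is the standard Fatou-lemma argument for integral functionals with locally Lipschitz integrands, in the spirit of Chang \cite{1981Chang} and Theorem 4.1 in \cite{AlvesCO2014baohan}.

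First, fix $(u,v),(u_{1},v_{1})\in L^{p}(\mathbb{R}^{N})\times L^{p}(\mathbb{R}^{N})$. By the definition of the generalized directional derivative, pick sequences $(w_{n},z_{n})\to 0$ in $L^{p}\times L^{p}$ and $\gamma_{n}\downarrow 0$ realizing the limsup:
\begin{align*}
J_{1}^{\circ}((u,v);(u_{1},v_{1}))=\lim_{n\to\infty}\int_{\mathbb{R}^{N}}\frac{H(x,u+w_{n}+\gamma_{n}u_{1},v+z_{n}+\gamma_{n}v_{1})-H(x,u+w_{n},v+z_{n})}{\gamma_{n}}dx .
\end{align*}
Passing to a subsequence, we may assume $w_{n}\to0$ and $z_{n}\to0$ a.e. We may also assume there exist dominating functions $g_{1},g_{2}\in L^{p}(\mathbb{R}^{N})$ with $|w_{n}|\le g_{1}$ and $|z_{n}|\le g_{2}$ a.e. This follows from the partial converse of dominated convergence, applied to a rapidly convergent subsequence: take $\sum_n\|w_n\|_p<\infty$ and set $g_1=\sum_n|w_n|$, and similarly for $g_2$.

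Next, for a.e. $x$ and each $n$, apply Lebourg's mean value theorem to the locally Lipschitz function $H(x,\cdot,\cdot)$ on the segment from $(u+w_{n},v+z_{n})$ to $(u+w_{n}+\gamma_{n}u_{1},v+z_{n}+\gamma_{n}v_{1})$. Since $H(x,\cdot,\cdot)$ is locally Lipschitz, as noted before Lemma \ref{lemma3.1}, this gives an intermediate point $(\theta_{n},\eta_{n})$ and some $(\varrho_{1},\varrho_{2})\in\partial H(x,\theta_{n},\eta_{n})$ with
\begin{align*}
\frac{H(x,u+w_{n}+\gamma_{n}u_{1},v+z_{n}+\gamma_{n}v_{1})-H(x,u+w_{n},v+z_{n})}{\gamma_{n}}=\varrho_{1}u_{1}+\varrho_{2}v_{1}.
\end{align*}
By the first lemma labelled \ref{lemma3.1}, $(\varrho_{1},\varrho_{2})$ lies in $[h^{-},h^{+}]\times[k^{-},k^{+}]$ at $(x,\theta_{n},\eta_{n})$. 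Then $(H_{2})$ bounds the integrand in absolute value by
\begin{align*}
\widetilde{C}_{0}\left(|\theta_{n}|^{p-1}+|\eta_{n}|^{p-1}\right)\left(|u_{1}|+|v_{1}|\right)\le C\left((|u|+g_{1}+|u_{1}|)^{p-1}+(|v|+g_{2}+|v_{1}|)^{p-1}\right)\left(|u_{1}|+|v_{1}|\right),
\end{align*}
for $\gamma_n\le1$. The right-hand side lies in $L^{1}(\mathbb{R}^{N})$ by H\"older's inequality with exponents $p/(p-1)$ and $p$. The same bound shows $J_{1}$ is finite, so the limsup above is meaningful.

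Finally, the dominating function lets us apply the reverse Fatou lemma:
\begin{align*}
J_{1}^{\circ}((u,v);(u_{1},v_{1}))\le\int_{\mathbb{R}^{N}}\limsup_{n\to\infty}\frac{H(x,u+w_{n}+\gamma_{n}u_{1},v+z_{n}+\gamma_{n}v_{1})-H(x,u+w_{n},v+z_{n})}{\gamma_{n}}dx .
\end{align*}
Pointwise a.e., $(u+w_{n},v+z_{n})\to(u,v)$ and $\gamma_{n}\downarrow0$. Hence the integrand's limsup is bounded by $H^{\circ}(x,(u,v);(u_{1},v_{1}))$, directly from the definition of the Clarke derivative in $\mathbb{R}^{2}$. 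This yields the claim.

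The main obstacle is the domination step. Because the domain is $\mathbb{R}^{N}$ and the perturbations $w_{n},z_{n}$ are arbitrary, we need the single $L^{p}$ majorants $g_{1},g_{2}$, obtained from the subsequence, together with the growth bound $(H_{2})$ evaluated at intermediate points. An alternative that avoids Lebourg's theorem is to use the integral representation $(H_{0})$ and $(H_{1})$ to bound the difference quotient directly. However, Lebourg's route is cleaner and also covers measurability, since the difference quotient itself is measurable in $x$.
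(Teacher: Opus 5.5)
Your proof is correct and follows the same route as the paper. Both choose $\gamma_n\downarrow 0$ and $(w_n,z_n)\to 0$ in $L^{p}(\mathbb{R}^{N})\times L^{p}(\mathbb{R}^{N})$ realizing the limsup, pass to an a.e.-convergent subsequence, apply the (reverse) Fatou lemma, and bound the pointwise limsup by $H^{\circ}(x,(u,v);(u_{1},v_{1}))$; your integrable majorant, built from the rapidly convergent subsequence together with Lebourg's theorem and $(H_2)$, supplies the upper domination that the paper's appeal to Fatou's lemma needs but never writes down.
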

\noindent
{\bf Proof.}
By the definition of a generalized directional derivative,
\begin{align*}
    J_{1}^{\circ}((u,v);(u_{2},v_{2}))
=
\limsup\limits_{(r,h)\rightarrow 0,\;\gamma\downarrow 0}
\left[\int_{\mathbb{R}^{N}}
\frac{\left[H(x,(u,v)+\gamma(u_{2},v_{2})+(r_{0},h_{0}))-H(x,(u,v)+(r_{0},h_{0}))\right]dx}{\gamma}
\right],
\end{align*}
for $(u,v),(u_{2},v_{2})\in L^{p}(\mathbb{R}^{N})\times L^{p}(\mathbb{R}^{N})$.
Consider $(\gamma_{n})\subset \mathbb{R}_{+}$ and
$(r_{n},h_{n})\subset L^{p}(\mathbb{R}^{N})\times L^{p}(\mathbb{R}^{N})$ such that $(r_{n},h_{n})\rightarrow (0,0)$ in $L^{p}(\mathbb{R}^{N})\times L^{p}(\mathbb{R}^{N})$ and $\gamma_{n}\rightarrow 0^{+}$ in $\mathbb{R}$ with
\begin{align*}
\lim\limits_{n}
\left[
\int_{\mathbb{R}^{N}}
\frac{\left[H(x,(u,v)+\gamma_{n}(u_{2},v_{2})+(r_{n},h_{n}))-H(x,(u,v)+(r_{n},h_{n}))\right]dx}{\gamma_{n}}
\right]
=
J_{1}^{\circ}((u,v);(u_{2},v_{2})).
\end{align*}
Without loss of generality, we can assume $0<\gamma_{n}\leq 1$.
Setting
\begin{align*}
H_{n}(x)
=
\frac{\left[H(x,(u,v)+\gamma_{n}(u_{2},v_{2})+(r_{n},h_{n}))-H(x,(u,v)+(r_{n},h_{n}))\right]}{\gamma_{n}},
\end{align*}
there is a subsequence of $(F_{n})$, still denoted by itself, such that the Fatou's lemma combined with definition of $F^{\circ}(x,(u,v);(u_{2},v_{2}))$ gives
\begin{align*}
\limsup\limits_{n}\int_{\mathbb{R}^N}H_{n}(x)dx
&\leq
\int_{\mathbb{R}^N}\limsup\limits_{n}H_{n}(x)dx
          \nonumber\\
&=
\int_{\mathbb{R}^N}
\limsup\limits_{n}
\frac{\left[H(x,(u,v)+\gamma_{n}(u_{2},v_{2})+(r_{n},h_{n}))-H(x,(u,v)+(r_{n},h_{n}))\right]}{\gamma_{n}}
dx
          \nonumber\\
&\leq
\int_{\mathbb{R}^N}H^{\circ}(x,(u,v);(u_{2},v_{2}))dx.
\end{align*}
Once
\begin{align*}
\lim\limits_{n}\int_{\mathbb{R}^N}H_{n}(x)dx
=
J_{1}^{\circ}((u,v);(u_{2},v_{2}))dx,
\end{align*}
then
\begin{align*}
J_{1}^{\circ}((u,v);(u_{2},v_{2}))dx
\leq
\int_{\mathbb{R}^N}H^{\circ}(x,(u,v);(u_{2},v_{2}))dx,\;\;\;\forall\;(u,v),(u_{2},v_{2})\in L^{p}(\mathbb{R}^{N})\times L^{p}(\mathbb{R}^{N}).
\end{align*}

\par
\begin{lemma}\label{lemma3.1}
Assume that $(H_1)$ and $(H_2)$ hold. Then
\begin{align*}
\partial J_{1}(u,v)\subset \left[h^{-}(x,u,v),h^{+}(x,u,v)\right]\times \left[k^{-}(x,u,v),k^{+}(x,u,v)\right]
\;\;\; \text{a.e.}\; x\in \mathbb{R}^{N}
\end{align*}
with $(u,v)\in L^{p}(\mathbb{R}^{N})\times L^{p}(\mathbb{R}^{N})$.
\end{lemma}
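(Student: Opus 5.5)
Fix $(u,v)\in L^{p}(\mathbb{R}^{N})\times L^{p}(\mathbb{R}^{N})$ and $(\rho_{1},\xi_{1})\in\partial J_{1}(u,v)$. The plan is to combine the two preceding lemmas: the integral bound $J_{1}^{\circ}\le\int H^{\circ}$ and the pointwise inclusion $\partial H(x,u,v)\subset[h^{-},h^{+}]\times[k^{-},k^{+}]$. This reduces the functional inclusion to a pointwise one through a localization argument.

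First, represent the element. Since $(L^{p}\times L^{p})^{\ast}\cong L^{p'}\times L^{p'}$, there is $(\widetilde\rho_{1},\widetilde\xi_{1})\in L^{p'}\times L^{p'}$ with
\begin{align*}
\int_{\mathbb{R}^{N}}(\widetilde\rho_{1}\varphi+\widetilde\xi_{1}\psi)\,dx\le J_{1}^{\circ}((u,v);(\varphi,\psi))\le\int_{\mathbb{R}^{N}}H^{\circ}(x,(u,v);(\varphi,\psi))\,dx
\end{align*}
for all $(\varphi,\psi)\in L^{p}\times L^{p}$. The integrand on the right is measurable, because $H^{\circ}(x,(u(x),v(x));\cdot)$ can be written as a countable limsup over rational increments. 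It is also integrable: by $(H_2)$ (more precisely, by $(H_1)$ giving the local Lipschitz constant of $H(x,\cdot,\cdot)$ near $(u(x),v(x))$), we have $|H^{\circ}(x,(u,v);(\varphi,\psi))|\le C(|u|^{p-1}+|v|^{p-1})(|\varphi|+|\psi|)$, which lies in $L^{1}$ by Hölder.

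Second, localize. Take $\varphi=r\chi_{E}$ and $\psi=s\chi_{E}$ with $E$ an arbitrary measurable set of finite measure and $(r,s)\in\mathbb{Q}^{2}$. The inequality becomes $\int_{E}\big[H^{\circ}(x,(u,v);(r,s))-(\widetilde\rho_{1}r+\widetilde\xi_{1}s)\big]dx\ge0$ for every such $E$. Hence, for each fixed $(r,s)\in\mathbb{Q}^{2}$,
\begin{align*}
\widetilde\rho_{1}(x)r+\widetilde\xi_{1}(x)s\le H^{\circ}(x,(u(x),v(x));(r,s))\quad\text{for a.e. } x.
\end{align*}
A countable union of null sets is null, so there is one null set off which this holds for all rational $(r,s)$. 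Since $H^{\circ}(x,(u,v);\cdot)$ is finite, convex and positively homogeneous, it is continuous, so the inequality extends to all $(r,s)\in\mathbb{R}^{2}$. Therefore $(\widetilde\rho_{1}(x),\widetilde\xi_{1}(x))\in\partial H(x,u(x),v(x))$ a.e.

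Third, apply the first lemma of this section (the pointwise one). It gives $(\widetilde\rho_{1}(x),\widetilde\xi_{1}(x))\in[h^{-},h^{+}]\times[k^{-},k^{+}]$ evaluated at $(x,u(x),v(x))$, for a.e. $x$, which is exactly the claimed inclusion.

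The main obstacle is the localization step. There we must check that $x\mapsto H^{\circ}(x,(u(x),v(x));(r,s))$ is measurable and integrable, so that an integral inequality over all sets $E$ implies a pointwise a.e. inequality. We also need the exceptional set to be independent of the direction, which is handled by the rational-direction and continuity argument. If measurability of $H^{\circ}$ turns out to be delicate, I would instead replace $H^{\circ}$ by an upper bound. The first lemma's proof already shows $H^{\circ}(x,(u,v);(r,s))\le\max\{h^{-}r,h^{+}r\}+\max\{k^{-}s,k^{+}s\}$, and this bound is measurable because $h^{\pm},k^{\pm}$ are superposition measurable. Localizing with this bound directly gives the componentwise bounds on $\widetilde\rho_{1}$ and $\widetilde\xi_{1}$ through the choices $(r,s)=(\pm1,0)$ and $(0,\pm1)$.
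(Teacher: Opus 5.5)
Your proof is correct, but your main route differs from the paper's. Your fallback is essentially what the paper does.

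The paper never passes through the pointwise subdifferential $\partial H(x,u(x),v(x))$. It tests $J_{1}^{\circ}\le\int H^{\circ}$ only with directions $(u_{3},0)$ where $u_{3}$ has a fixed sign. It then bounds the integrand directly by $h^{+}(x,u_{1},v_{1})u_{3}$ or $h^{-}(x,u_{1},v_{1})u_{3}$, redoing the line-integral estimate of the pointwise lemma inside the integral. Finally it gets $h^{-}\le u_{2}\le h^{+}$ a.e. by contradiction with $\chi_{\Omega}$, and handles the second component separately, appealing to $(H_0)$.

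You instead localize over all rational directions. This shows that the $L^{p'}$ representative $(\widetilde\rho_{1},\widetilde\xi_{1})$ is an a.e. selection of $\partial H(x,u(x),v(x))$. You then invoke the pointwise lemma, which the paper proves but does not actually use at this step. Your route buys three things:
\begin{itemize}
\item It is modular.
\item It gives a stronger intermediate conclusion. The Clarke subdifferential of $H(x,\cdot,\cdot)$ can be strictly smaller than the box; for $H=|u-v|$ at the origin it is a segment.
\item It treats both components at once. The asymmetry between the two representations in $(H_0)$ is absorbed into the pointwise lemma, and you need no sign-case bookkeeping for the test functions.
\end{itemize}
The price is the measure-theoretic work you flagged yourself. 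First, $x\mapsto H^{\circ}(x,(u(x),v(x));(r,s))$ must be measurable, which needs $H$ to be Carath\'eodory: measurability in $x$ comes from joint measurability of $h,k$ via Fubini, and continuity in $(u,v)$ from $(H_1)$. Second, you need a single exceptional null set, which you obtain from countably many directions plus continuity of the finite convex function $H^{\circ}(x,(u,v);\cdot)$.

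The paper's route needs only sign-definite test directions in each coordinate. At the localization step it needs only the measurability of $h^{\pm}$ and $k^{\pm}$ along $(u,v)$, which the standing superposition-measurability assumption supplies, with local integrability coming from $(H_2)$. That is why your fallback works as a drop-in replacement.
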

{\bf Proof.}
Given $(u_{1},v_{1})\in L^{p}(\mathbb{R}^{N})\times L^{p}(\mathbb{R}^{N})$, if $(u_{2},v_{2})\in \partial J_{1}(u_{1},v_{1})$, then
\begin{align*}
(u_{2},v_{2})\in L^{p'}(\mathbb{R}^{N})\times L^{p'}(\mathbb{R}^{N})
\end{align*}
and
\begin{align*}
\langle(u_{2},v_{2}),(u_{3},v_{3})\rangle
\leq
J_{1}^{\circ}((u_{1},v_{1});(u_{3},v_{3}))
\;\;\; \forall\; (u_{3},v_{3})\in L^{p}(\mathbb{R}^{N})\times L^{p}(\mathbb{R}^{N}),
\end{align*}
that is,
\begin{align}\label{d5-12}
\int_{\mathbb{R}^{N}}u_{2}u_{3}dx
+\int_{\mathbb{R}^{N}}v_{2}v_{3}dx
\leq
J_{1}^{\circ}((u_{1},v_{1});(u_{3},v_{3}))
\;\;\; \forall\; (u_{3},v_{3})\in L^{p}(\mathbb{R}^{N})\times L^{p}(\mathbb{R}^{N}).
\end{align}
By Lemma \ref{}, we know that
\begin{align*}
J_{1}^{\circ}((u_{1},v_{1});(u_{3},v_{3}))
\leq
\int_{\mathbb{R}^{N}}H^{\circ}(x,(u_{1},v_{1});(u_{3},v_{3}))dx
\;\;\; \forall\; (u_{3},v_{3})\in L^{p}(\mathbb{R}^{N})\times L^{p}(\mathbb{R}^{N}).
\end{align*}
Thus, if $v_{3}=0$,
\begin{align*}
J_{1}^{\circ}((u_{1},v_{1});(u_{3},0))
\leq
\int_{\mathbb{R}^{N}}
\limsup\limits_{(r_{2},h_{2})\rightarrow 0,\;\gamma\downarrow 0}
\frac{1}{\gamma}\left(\int_{0}^{u_{1}+\gamma u_{3}+r_{2}}h(x,\tau,v_{1}+h_{2})d\tau
-\int_{0}^{u_{1}+r_{2}}h(x,\tau,v_{1}+h_{2})d\tau\right)dx,
\end{align*}
for all $u_{3}\in L^{p}(\mathbb{R}^{N})$.
Then, from \eqref{d5-12},
\begin{align*}
\int_{\mathbb{R}^{N}}u_{2}u_{3}dx
\leq
\int_{\mathbb{R}^{N}}
\limsup\limits_{(r_{2},h_{2})\rightarrow 0,\;\gamma\downarrow 0}
\frac{1}{\gamma}\left(\int_{0}^{u_{1}+\gamma u_{3}+r_{2}}h(x,\tau,v_{1}+h_{2})d\tau
-\int_{0}^{u_{1}+r_{2}}h(x,\tau,v_{1}+h_{2})d\tau\right)dx,
\end{align*}
for all $u_{3}\in L^{p}(\mathbb{R}^{N})$.
\par
Assuming $u_{3}(x)\geq 0$ a.e in $\mathbb{R}^{N}$, we see that
\begin{align*}
\int_{\mathbb{R}^{N}}u_{2}u_{3}dx
&\leq
\int_{\mathbb{R}^{N}}
\limsup\limits_{(r_{2},h_{2})\rightarrow 0,\;\gamma\downarrow 0}
\frac{1}{\gamma}\left(\int_{0}^{u_{1}+\gamma u_{3}+r_{2}}h(x,\tau,v_{1}+h_{2})d\tau
-\int_{0}^{u_{1}+r_{2}}h(x,\tau,v_{1}+h_{2})d\tau\right)dx
          \nonumber\\
&=
\int_{\mathbb{R}^{N}}
\left(\limsup\limits_{(r_{2},h_{2})\rightarrow 0,\;\gamma\downarrow 0}
\frac{1}{\gamma}\int_{u_{1}+r_{2}}^{u_{1}+\gamma u_{3}+r_{2}}h(x,\tau,v_{1}+h_{2})d\tau
\right)dx
          \nonumber\\
&\leq
\int_{\mathbb{R}^{N}}h^{+}(x,u_{1},v_{1})u_{3}dx,
\end{align*}
that is,
\begin{align*}
\int_{\mathbb{R}^{N}}u_{2}u_{3}dx
\leq
\int_{\mathbb{R}^{N}}h^{+}(x,u_{1},v_{1})u_{3}dx.
\end{align*}
On the other hand, if $u_{3}(x)\leq 0$ a.e in $\mathbb{R}^{N}$, we find
\begin{align*}
\int_{\mathbb{R}^{N}}u_{2}(-u_{3})dx
\leq
\int_{\mathbb{R}^{N}}h^{-}(x,u_{1},v_{1})(-u_{3})dx.
\end{align*}
Hence,
\begin{align}\label{d5-13}
\int_{\mathbb{R}^{N}}u_{2}u_{3}dx
\leq
\int_{\mathbb{R}^{N}}h^{+}(x,u_{1},v_{1})u_{3}dx,\;\;\;\text{if}\;\;u_{3}(x)\geq 0\;\;\text{a.e\;in}\;\mathbb{R}^{N}
\end{align}
and
\begin{align}\label{d5-14}
\int_{\mathbb{R}^{N}}u_{2}u_{3}dx
\geq
\int_{\mathbb{R}^{N}}h^{-}(x,u_{1},v_{1})u_{3}dx,\;\;\;\text{if}\;\;u_{3}(x)\leq 0\;\;\text{a.e\;in}\;\mathbb{R}^{N}.
\end{align}
In the following, we can see that
\begin{align*}
h^{-}(x,u_{1}(x),v_{1}(x))
\leq
u_{2}(x)
\leq
h^{+}(x,u_{1}(x),v_{1}(x))\;\;\;\text{a.e\;in}\;\;\mathbb{R}^{N}.
\end{align*}
\par
In fact, suppose, by contradiction, that there exists $\Omega\subset \mathbb{R}^{N}$ with $|\Omega|>0$ and
\begin{align}\label{d5-15}
u_{2}(x)
<
h^{-}(x,u_{1}(x),v_{1}(x)),\;\;\;\text{a.e\;in}\;\;\Omega.
\end{align}
Without loss of generality, we can assume that $|\Omega|<\infty$.
Thereby, $\overline{u}_{3}(x)=\chi_{\Omega}(x)\in L^{p}(\mathbb{R}^{N})$ and by \eqref{},
\begin{align*}
\int_{\Omega}u_{2}(x)dx
=
\int_{\mathbb{R}^{N}}u_{2}(x)\overline{u}_{3}(x)dx
\geq
\int_{\mathbb{R}^{N}}h^{-}(x,u_{1},v_{1})\overline{u}_{3}dx
=
\int_{\Omega}h^{-}(x,u_{1},v_{1})dx,
\end{align*}
which contradicts \eqref{d5-15}. Similarly, using \eqref{d5-13}, it turns out that
$u_{2}(x)\leq h^{+}(x,u_{1}(x),v_{1}(x))$ a.e in $\mathbb{R}^{N}$.
\par
In order to prove that
$k^{-}(x,u_{1}(x),v_{1}(x))\leq v_{2}(x) \leq k^{+}(x,u_{1}(x),v_{1}(x))$
a.e in $\mathbb{R}^{N}$, we can argue as above together with the condition  $(H_{0})$.
\par
Since the embedding $W \hookrightarrow L^{p}(\mathbb{R}^{N})\times L^{p}(\mathbb{R}^{N})$ is continuous and $W$ is dense in $L^{p}(\mathbb{R}^{N})\times L^{p}(\mathbb{R}^{N})$, it follows that
\begin{lemma}\label{}
Assume that $(H_0)$-$(H_2)$ hold. Considering $J_{1}$ on space $W$, we have
\begin{itemize}
\item[(1)]
$J_{1}$ is locally Lipschitz in $W$;
\item[(2)]
$\partial J_{1}(u,v) \subset [h^{-}(x,u,v),h^{+}(x,u,v)]\times [k^{-}(x,u,v),k^{+}(x,u,v)]$, almost everywhere $x\in \mathbb{R}^{N}$
   and $(u,v)\in W$.
\end{itemize}
\end{lemma}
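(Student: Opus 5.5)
The plan is to transfer the $L^p$ statement (the preceding lemma on $L^{p}(\mathbb{R}^{N})\times L^{p}(\mathbb{R}^{N})$) to $W$ through the continuous embedding $\iota: W\hookrightarrow L^{p}(\mathbb{R}^{N})\times L^{p}(\mathbb{R}^{N})$. Write $J_1|_W=J_1\circ\iota$. Continuity of $\iota$ follows from the Orlicz--Sobolev embedding criterion \eqref{2.1.6} with $\Psi(t)=|t|^p$. By $(\phi_4)$ we have $|t|^p\le C|t|^{l_i}\le C'\Phi_i(t)$ near $0$, since $p>l_i$. At infinity, $|t|^p/\Phi_{i*}(t)$ stays bounded because $p<l_i^\ast$. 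Combined with Lemma \ref{lemma2.8} and $V_i>C_4>0$, this gives $\|(u,v)\|_{L^p\times L^p}\le S\|(u,v)\|$.

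\emph{Step 1 (local Lipschitz).} Let $(u,v)\in W$, and let $U$ be an $L^p$-neighbourhood of $\iota(u,v)$ on which $J_1$ is $L$-Lipschitz. Then $\iota^{-1}(U)$ is a $W$-neighbourhood of $(u,v)$. For $z,w$ in it, $|J_1(z)-J_1(w)|\le L\|\iota z-\iota w\|_{L^p}\le LS\|z-w\|$. If one wants this explicitly rather than through part (1) of the $L^p$ lemma, $(H_1)$ with the mean value theorem gives $|H(x,z)-H(x,w)|\le C(|z|^{p-1}+|w|^{p-1})|z-w|$. Hölder's inequality with exponents $p'$ and $p$ then yields local Lipschitz continuity on $L^p\times L^p$.

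\emph{Step 2 (generalized gradient).} Compare directional derivatives. For $z,\eta\in W$, perturbations $h\to0$ in $W$ satisfy $\iota h\to0$ in $L^p$. Hence the limsup defining $(J_1|_W)^\circ(z;\eta)$ runs over a smaller family than the one defining $J_1^\circ(\iota z;\iota\eta)$, so $(J_1|_W)^\circ(z;\eta)\le J_1^\circ(\iota z;\iota\eta)$. This is the chain rule for composition with a continuous linear map in Clarke's calculus \cite{1981Chang,Chang1986}, and it gives $\partial(J_1\circ\iota)(z)\subset\iota^{*}\partial J_1(\iota z)$. Equality holds when $\iota$ has dense range, which is where the density of $W$ in $L^p\times L^p$ enters. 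Every $w\in\partial (J_1|_W)(u,v)$ therefore acts as $\langle w,(\varphi,\psi)\rangle=\int(\widetilde\rho\varphi+\widetilde\xi\psi)\,dx$ for some $(\widetilde\rho,\widetilde\xi)\in\partial J_1(u,v)\subset L^{p'}\times L^{p'}$. By the preceding lemma, $(\widetilde\rho,\widetilde\xi)$ lies a.e. in $[h^-,h^+]\times[k^-,k^+]$, which is statement (2).

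\emph{Main obstacle.} The delicate point is the inclusion $\partial(J_1\circ\iota)\subset\iota^*\partial J_1$. The element $w\in W^*$ must be shown to extend to a functional on $L^p\times L^p$ that belongs to $\partial J_1$. My first attempt is a Hahn--Banach argument. The map $\eta\mapsto J_1^\circ(\iota z;\eta)$ is sublinear and bounded by $L\|\eta\|_{L^p}$ on $L^p\times L^p$. Since $w\le J_1^\circ(\iota z;\iota\,\cdot)$ on $\iota W$, the functional $w$ is bounded in the $L^p$-norm on $\iota W$. It thus extends uniquely by density to a continuous functional on $L^p\times L^p$, still dominated by $J_1^\circ(\iota z;\cdot)$ because the latter is continuous. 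The extension is then exactly an element of $\partial J_1(\iota z)$ restricting to $w$, and Riesz representation in $L^{p'}\times L^{p'}$ identifies it with $(\widetilde\rho,\widetilde\xi)$.
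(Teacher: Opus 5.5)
Your proposal is correct and takes the same route as the paper. The paper just asserts that the lemma follows from the $L^{p}(\mathbb{R}^{N})\times L^{p}(\mathbb{R}^{N})$ version because $W\hookrightarrow L^{p}(\mathbb{R}^{N})\times L^{p}(\mathbb{R}^{N})$ is continuous and dense, and you supply what it omits: local Lipschitz continuity of $J_1\circ\iota$, the estimate $(J_1\circ\iota)^\circ(z;\eta)\le J_1^\circ(\iota z;\iota\eta)$, and the extension of $w$ to an element of $\partial J_1(\iota z)$.

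One aside is inaccurate but not used: dense range of $\iota$ alone does not give the equality $\partial(J_1\circ\iota)(z)=\iota^{*}\partial J_1(\iota z)$, since Clarke's sufficient condition (images of neighbourhoods dense in neighbourhoods) fails for non-surjective embeddings. Only the inclusion is needed, and your extension argument proves it; Hahn--Banach would even give it without density.
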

\par
By basic results from nonsmooth analysis,
for each $(u,v)\in W$,
\begin{align}\label{d5-16}
     \partial I(u,v)
=    J_{0}'(u,v)
    + \partial J_{1}(u,v).
\end{align}
Namely, for each $(u,v)\in W$,
 and each $(\hat{u},\hat{v})\in \partial I$, there exists $(u_{1},v_{1})\in \partial J_{1}$ such that
\begin{align}\label{d5-17}
     \langle \partial I,(u_{0},v_{0})\rangle
=    \langle J_{0}'(u,v),(u_{0},v_{0})\rangle
    + \langle \partial J_{1}(u,v),(u_{0},v_{0})\rangle,\quad \forall\; (u_{0},v_{0})\in W,
\end{align}
i.e.,
\begin{align}\label{d5-18}
&  \langle(\hat{u},\hat{v}),(u_{0},v_{0})\rangle
=    \int_{\mathbb{R}^{N}}\phi_1(|\nabla u(x)|)\nabla u(x)\nabla u_{0}(x)dx
   + \int_{\mathbb{R}^{N}}(V_{1}(x)-\lambda_{1})\phi_1(|u(x)|)u(x)u_{0}(x)dx
          \nonumber\\
&\qquad\qquad\qquad\qquad
   + \int_{\mathbb{R}^{N}}\phi_2(|\nabla v(x)|)\nabla v(x)\nabla v_{0}(x)dx
   + \int_{\mathbb{R}^{N}}(V_{2}(x)-\lambda_{2})\phi_2(|v(x)|)v(x)v_{0}(x)dx
          \nonumber\\
&\qquad\qquad\qquad\qquad
    + \int_{\mathbb{R}^{N}}u_{1}(x)u_{0}(x)dx
    + \int_{\mathbb{R}^{N}}v_{1}(x)v_{0}(x)dx
    - \int_{\mathbb{R}^{N}}f(x)(u_{0}(x)+v_{0}(x))dx,\quad \forall\;(u_{0},v_{0})\in W.
\end{align}

\begin{lemma}\label{t5-10}
Assume that $(\phi_1)$-$(\phi_4)$, $(H_0)$-$(H_4)$, $(F_{0})$ and $(V_0)$ hold. Then $\lim\limits_{\|(u,v)\|\rightarrow \infty}I(u,v)=+\infty$ and $I$ is bounded from below.
\end{lemma}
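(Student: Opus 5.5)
We prove coercivity by bounding $I(u,v)$ from below term by term. The nonlinear term is harmless: by \eqref{d5-8} (or simply by $(H_3)$, which gives $H\ge 0$), we have $\int_{\mathbb{R}^N}H(x,u,v)\,dx\ge 0$, so it may be dropped. Everything then reduces to showing that
\begin{align*}
J_0(u,v)=\int_{\mathbb{R}^N}\Phi_1(|\nabla u|)dx+\int_{\mathbb{R}^N}(V_1(x)-\lambda_1)\Phi_1(|u|)dx+\int_{\mathbb{R}^N}\Phi_2(|\nabla v|)dx+\int_{\mathbb{R}^N}(V_2(x)-\lambda_2)\Phi_2(|v|)dx-\int_{\mathbb{R}^N}f(u+v)dx
\end{align*}
tends to $+\infty$ and is bounded below.

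\textbf{Absorbing $\lambda_i$.} By $(V_0)$ we have $V_i(x)>C_4>\lambda_i$, so
\[
V_i(x)-\lambda_i\ge \Bigl(1-\frac{\lambda_i}{C_4}\Bigr)V_i(x)=:\theta_i V_i(x),\qquad \theta_i>0.
\]
Hence the first four terms are at least
\[
\theta\left[\int_{\mathbb{R}^N}\Phi_1(|\nabla u|)+V_1\Phi_1(|u|)\,dx+\int_{\mathbb{R}^N}\Phi_2(|\nabla v|)+V_2\Phi_2(|v|)\,dx\right],\qquad \theta=\min\{1,\theta_1,\theta_2\}.
\]
Applying Lemma \ref{lemma2.7} to the gradient part and Lemma \ref{lemma2.8.1} to the weighted part gives
\[
\int_{\mathbb{R}^N}\Phi_1(|\nabla u|)dx+\int_{\mathbb{R}^N}V_1\Phi_1(|u|)dx\ \ge\ \zeta_0(\|\nabla u\|_{\Phi_1})+\zeta_0(\|u\|_{\Phi_1,V_1}).
\]
When a norm exceeds $1$, $\zeta_0(t)=t^{l_1}$. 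Using the equivalence of norms in Lemma \ref{lemma2.8}, we obtain
\[
\int_{\mathbb{R}^N}\Phi_1(|\nabla u|)dx+\int_{\mathbb{R}^N}V_1\Phi_1(|u|)dx\ \ge\ c\|u\|_{1,\Phi_1}^{l_1}-c'
\]
for all $u$, with constants $c,c'>0$. The analogous bound holds for $v$ with exponent $l_2$.

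\textbf{The linear term.} By $(F_0)$ and H\"older,
\[
\Bigl|\int_{\mathbb{R}^N}f(u+v)\,dx\Bigr|\le \|f\|_q\bigl(\|u\|_p+\|v\|_p\bigr).
\]
We then need a continuous embedding $W_i\hookrightarrow L^p(\mathbb{R}^N)$ with constant $h_{i,3}$, i.e.\ $\|u\|_p\le h_{1,3}\|u\|_{1,\Phi_1}$. This is justified by \eqref{2.1.6} with $\Psi(t)=|t|^p$: near $0$, $|t|^p\le C\Phi_1(t)$ because $p>l_1$ and $(\phi_4)$ holds; near infinity, $|t|^p$ is dominated by $\Phi_{1*}$ because $p<l_1^*$. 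Moreover $W_1\subset W^{1,\Phi_1}(\mathbb{R}^N)$ continuously since $V_1\ge C_4>0$. These are exactly the embedding constants appearing in $(V_0)$. It follows that
\[
I(u,v)\ \ge\ c\|u\|_{1,\Phi_1}^{l_1}+c\|v\|_{1,\Phi_2}^{l_2}-C\bigl(\|u\|_{1,\Phi_1}+\|v\|_{1,\Phi_2}\bigr)-C'.
\]
Since $l_i>1$, the right-hand side tends to $+\infty$ as $\|(u,v)\|\to\infty$: at least one component norm must go to infinity, and that component's superlinear power dominates its own linear term. Boundedness from below follows from the same inequality, because a function $t\mapsto ct^{l}-Ct$ with $l>1$ is bounded below on $[0,\infty)$.

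\textbf{Main obstacle.} The delicate step is the embedding $W_i\hookrightarrow L^p$ on the unbounded domain. It needs the growth comparison near $0$ and near infinity together with the lower bound $V_i\ge C_4$, whereas the integrability condition $V_i^{-1}\in L^{r/(1-r)}$ is only needed later for compactness, not here. If one prefers to use the explicit constant in $(V_0)$, one would instead use \eqref{3.1.14}, $\Phi_i(t)\ge C_{i,5}|t|^{l_i}$, to obtain the lower bound $C_{i,5}\|\cdot\|^{l_i}$ directly. The only role of the condition $\lambda_i<C_4$ is to make $\theta_i$ positive.
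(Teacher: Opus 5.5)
Your proof is correct and follows essentially the same route as the paper's. You absorb $\lambda_i$ via $V_i-\lambda_i\ge (1-\lambda_i/C_4)V_i$, bound the modulars from below by powers of $\|u\|_{1,\Phi_1}$ and $\|v\|_{1,\Phi_2}$, and control the $f$-term by H\"older's inequality and the embedding $W_i\hookrightarrow L^p(\mathbb{R}^N)$, so that the superlinear power dominates the linear term. The differences are cosmetic: you drop $\int_{\mathbb{R}^N}H\,dx$ using $H\ge 0$, which is actually more robust than the paper's reliance on \eqref{d5-8}, and you treat the gradient and weighted modulars separately instead of passing through $\|\cdot\|_{3,\Phi_i}$. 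One mislabel: the $L^p$ embedding constant is what the paper calls $C_{i,6}$, while $h_{i,3}$ there is a finite-dimensional norm-equivalence constant.
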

\par
\noindent
{\bf Proof.}
For any given $(u,v)\in W$, by $(\phi_4)$, $(V)$  and  H\"{o}lder's inequality, we have
\begin{align}\label{d5-19}
&I(u,v)
=        \int_{\mathbb{R}^{N}}\Phi_1(|\nabla u|)dx
          + \int_{\mathbb{R}^{N}}\Phi_2(|\nabla v|)dx
          + \int_{\mathbb{R}^{N}}V_{1}(x)\Phi_1(| u|)dx
          + \int_{\mathbb{R}^{N}}V_{2}(x)\Phi_2(| v|)dx
          \nonumber\\
&~~~~
          - \lambda_{1}\int_{\mathbb{R}^{N}}\Phi_1(| u|)dx
          - \lambda_{2}\int_{\mathbb{R}^{N}}\Phi_2(| v|)dx
          + \int_{\mathbb{R}^{N}}H(x,u,v)dx
          - \int_{\mathbb{R}^{N}}f(x)(u+v)dx
          \nonumber\\
&\geq        \int_{\mathbb{R}^{N}}\Phi_1(|\nabla u|)dx
          + \int_{\mathbb{R}^{N}}\Phi_2(|\nabla v|)dx
          + \int_{\mathbb{R}^{N}}V_{1}(x)\Phi_1(| u|)dx
          + \int_{\mathbb{R}^{N}}V_{2}(x)\Phi_2(| v|)dx
          \nonumber\\
&~~~~
          - \frac{\lambda_{1}}{C_{4}}\int_{\mathbb{R}^{N}}V_{1}(x)\Phi_1(|u|)dx
          - \frac{\lambda_{2}}{C_{4}}\int_{\mathbb{R}^{N}}V_{2}(x)\Phi_2(|v|)dx
          + \widehat{C}_1\int_{\mathbb{R}^{N}}|u|^{p}dx
          + \widehat{C}_1\int_{\mathbb{R}^{N}}|v|^{p}dx
          \nonumber\\
&~~~~
          -\left(\int_{\mathbb{R}^{N}}|f(x)|^{q}dx\right)^{\frac{1}{q}}
           \left(\int_{\mathbb{R}^{N}}|u|^{p}dx\right)^{\frac{1}{p}}
          - \left(\int_{\mathbb{R}^{N}}|f(x)|^{q}dx\right)^{\frac{1}{q}}
           \left(\int_{\mathbb{R}^{N}}|v|^{p}dx\right)^{\frac{1}{p}}
          \nonumber\\
&\geq        \min\left\{1,1-\frac{\lambda_{1}}{C_{4}}\right\}
             \min\left\{\|u\|_{3,\Phi_1}^{l_{1}},\|u\|_{3,\Phi_1}^{m_{1}}\right\}
          - \|f\|_{q}\|u\|_{p}
         + \widehat{C}_1\|u\|_{p}^{p}
          \nonumber\\
&~~~~
          +\min\left\{1,1-\frac{\lambda_{2}}{C_{4}}\right\}
              \min\left\{\|v\|_{3,\Phi_2}^{l_{2}},\|v\|_{3,\Phi_2}^{m_{2}}\right\}
          - \|f\|_{q}\|v\|_{p}
         + \widehat{C}_1\|v\|_{p}^{p}
          \nonumber\\
&\geq
       \frac{\min\left\{1,1-\frac{\lambda_{1}}{C_{4}}\right\}}{2^{m_{1}}}
       \min\left\{\|u\|_{1,\Phi_1}^{l_{1}},\|u\|_{1,\Phi_1}^{m_{1}}\right\}
          - \|f\|_{q}C_{1,6}\|u\|_{1,\Phi_1}
         + \widehat{C}_1\|u\|_{p}^{p}
          \nonumber\\
&~~~~
+  \frac{\min\left\{1,1-\frac{\lambda_{2}}{C_{4}}\right\}}{2^{m_{2}}}
   \min\left\{\|v\|_{1,\Phi_2}^{l_{2}},\|v\|_{1,\Phi_2}^{m_{2}}\right\}
          - \|f\|_{q}C_{2,6}\|v\|_{1,\Phi_2}
         + \widehat{C}_1\|v\|_{p}^{p},
\end{align}
where $p\in \left(\max\left\{l_{1},l_{2}\right\},\min\left\{l_{1}^{\ast},l_{2}^{\ast}\right\}\right)$, $\frac{1}{p}+\frac{1}{q}=1$.
Consequently, as $\|(u,v)\|\rightarrow \infty$, distinguishing four possible cases ($\|u\|_{p}\rightarrow \infty$, $\|u\|_{p}$ is bounded, $\|v\|_{p}\rightarrow \infty$ or $\|v\|_{p}$ is bounded)
we get $I(u,v)\rightarrow +\infty$, and hence, together with \eqref{d5-19}, we know that $I$ is bounded from below.

\par
\noindent
\begin{lemma}\label{t5-11}
Assume that $(\phi_1)$-$(\phi_4)$, $(H_0)$-$(H_4)$, $(F_{0})$ and $(V_0)$ hold. Then $I$ satisfies the condition (PS).
\end{lemma}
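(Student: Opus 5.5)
Let $\{(u_n,v_n)\}\subset W$ be a (PS) sequence: $I(u_n,v_n)$ is bounded and $\lambda(u_n,v_n)\to 0$. By \eqref{d5-16} there are $(\hat u_n,\hat v_n)\in\partial I(u_n,v_n)$ with $\|(\hat u_n,\hat v_n)\|_{W^*}\to0$, and $(\rho_n,\xi_n)\in\partial J_1(u_n,v_n)$ with $\hat u_n=J_0'(u_n,v_n)+\rho_n$, $\hat v_n=\ldots$ as in \eqref{d5-18}. By the last lemma before \eqref{d5-16}, $\rho_n\in[h^-,h^+](x,u_n,v_n)$ and $\xi_n\in[k^-,k^+](x,u_n,v_n)$ a.e.

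\emph{Step 1 (boundedness).} Lemma \ref{t5-10} says $I$ is coercive. Since $I(u_n,v_n)$ is bounded, $\{(u_n,v_n)\}$ is bounded in $W$. This avoids any Ambrosetti--Rabinowitz manipulation. Passing to a subsequence and using reflexivity, we get $(u_n,v_n)\rightharpoonup(u,v)$ in $W$.

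\emph{Step 2 (compactness in $L^p$).} We need $u_n\to u$, $v_n\to v$ in $L^p(\mathbb{R}^N)$. Use $(V_0)$: $V_i^{-1}\in L^{r/(1-r)}$. Split $\mathbb{R}^N=B_R\cup B_R^c$.
\begin{itemize}
\item On $B_R$, the compact embedding $W^{1,\Phi_i}\hookrightarrow L^{p}_{loc}$ applies, since $p<l_i^*$ and $\Psi(t)=|t|^p$ satisfies \eqref{2.1.7} by $(\phi_3)$--$(\phi_4)$.
\item On $B_R^c$, apply Hölder with $V_i^{-1}$ and interpolation between $\int V_i\Phi_i(|u_n|)$, which is bounded, and the $L^{l_i^*}$ bound from the embedding. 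This makes the tails uniformly small because $\|V_i^{-1}\|_{L^{r/(1-r)}(B_R^c)}\to0$.
\end{itemize}
Then $(H_2)$ gives $|\rho_n|+|\xi_n|\le \widetilde C_0(|u_n|^{p-1}+|v_n|^{p-1})$. So $\{(\rho_n,\xi_n)\}$ is bounded in $L^{p'}$, and $\int\rho_n(u_n-u)+\xi_n(v_n-v)\,dx\to0$. The same holds for $\int f(u_n-u+v_n-v)\to0$.

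\emph{Step 3 ($(S_+)$ argument).} Test \eqref{d5-18} with $(u_n-u,v_n-v)$ and use $\langle(\hat u_n,\hat v_n),(u_n-u,v_n-v)\rangle\to0$ together with Step 2. This yields
$$\limsup_n\Big[\int\phi_1(|\nabla u_n|)\nabla u_n\nabla(u_n-u)+\int(V_1-\lambda_1)\phi_1(|u_n|)u_n(u_n-u)+(\text{same for }v)\Big]\le0 .$$
Subtract the corresponding terms at $(u,v)$; they tend to $0$ by weak convergence. The operator $A(u)=-\mathrm{div}(\phi_1(|\nabla u|)\nabla u)+a_1\phi_1(|u|)u$ is monotone, since $t\mapsto t\phi_1(t)$ is increasing by $(\phi_2)$ and $a_1>0$, so the $\limsup$ is actually a limit equal to $0$. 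Monotonicity, strict convexity of $\Phi_i$ and the $\Delta_2$ property then give $(S_+)$. Concretely, convexity gives
$$\int\Phi_1(|\nabla u|)\ge\int\Phi_1(|\nabla u_n|)+\langle\cdot,u-u_n\rangle ,$$
hence $\limsup\int\Phi_1(|\nabla u_n|)+a_1\Phi_1(|u_n|)\le\int\Phi_1(|\nabla u|)+a_1\Phi_1(|u|)$. Combined with weak lower semicontinuity and a Brezis--Lieb/uniform convexity argument for $\Delta_2$ Orlicz modulars, this yields modular and hence norm convergence by Lemma \ref{lemma2.8} and \eqref{2.1.3+}. Since $a_1\ge V_1-\lambda_1\ge(1-\lambda_1/C_4)V_1$, the weighted norm $\|\cdot\|_{\Phi_1,V_1}$ is controlled.

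\emph{Main obstacle.} Step 2 is the hardest part: the tail estimate on the unbounded domain. I would first try to show that the space $\{u:\int V_1\Phi_1(|u|)<\infty\}$ embeds compactly into $L^p$ by the Hölder/interpolation argument above, choosing the interpolation exponents so that $r$ fits.
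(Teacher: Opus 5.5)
Your proposal is correct and follows the same outline as the paper's proof. Both use boundedness from the coercivity in Lemma~\ref{t5-10}, a weak limit with strong $L^p$ convergence, testing against $(u_n-u,v_n-v)$, removal of the nonlinear term via $(H_2)$ and H\"older, and then monotonicity plus an $(S_+)$ argument; the paper simply outsources that last step to Lemma 3.9 of \cite{wang2017-4}. Since uniform convexity of $\Phi_i$ is not assumed, your $(S_+)$ step should go the Brezis--Lieb way, getting $\nabla u_n\to\nabla u$ a.e.\ from the nonnegative monotone integrand tending to $0$ in $L^1$.

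The one substantive difference is the strong convergence $u_n\to u$ in $L^p(\mathbb{R}^N)$. You actually prove it by a local/tail splitting based on $V_i^{-1}\in L^{r/(1-r)}$, whereas the paper merely asserts it. The paper instead spends the weight hypothesis on an $L^{\frac{p_1}{p_1-1}}$ bound for $\tilde u_n-\tilde u$, and then tacitly uses $u_n\to u$ in $L^{p_1}$. Your placement of that hypothesis is the one the argument genuinely needs.
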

\par
\noindent
{\bf Proof.}
Let $\{(u_{n},v_{n})\}\subset W$ be such that
\begin{align*}
|I(u_{n},v_{n})| \leq C, \;\;\forall\;n\in \mathbb{N}
\end{align*}
and
$\lim_{n\rightarrow \infty}\lambda(u_{n},v_{n})=0$.
By Lemma \ref{t5-10}, we know that $\{(u_{n},v_{n})\}$ is bounded.
Hence, up to a subsequence $\{(u_n,v_n)\}$,
there exists a point $(u,v)\in W$ such that\\
 $\star$\quad $u_n\rightharpoonup u$ in $W_{1}$,
 \quad $u_n\rightarrow u$ in $L^{p}(\mathbb{R}^{N})$,
 \quad $u_n(x)\rightarrow u(x)$ a.e. in $\mathbb{R}^{N}$;\\
 $\star$\quad $v_n\rightharpoonup v$ in $W_{2}$,
 \quad $v_n\rightarrow v$ in $L^{p}(\mathbb{R}^{N})$,
 \quad $v_n(x)\rightarrow v(x)$ a.e. in $\mathbb{R}^{N}$.
 \par
Since
$\lambda(u_{n},v_{n}):=\min_{(\hat{u},\hat{v})\in \partial I(u_{n},v_{n})}\|(\hat{u},\hat{v})\|$,
there exists $(\hat{u}_{n},\hat{v}_{n})\in \partial I(u_{n},v_{n})$ such that
$\lambda(u_{n},v_{n})=\|(\hat{u}_{n},\hat{v}_{n})\|$.
Thus, there exists $(\hat{u},\hat{v})\in \partial I(u,v)$,
$(\tilde{u}_{n},\tilde{v}_{n})\in [h^{-}(x,u_{n},v_{n}),h^{+}(x,u_{n},v_{n})]\times [k^{-}(x,u_{n},v_{n}),k^{+}(x,u_{n},v_{n})]$,
$(\hat{u},\hat{v})\in \partial I(u,v)$,
$(\tilde{u},\tilde{v})\in [h^{-}(x,u,v),h^{+}(x,u,v)]\times [k^{-}(x,u,v),k^{+}(x,u,v)]$
such that
\begin{align}\label{d5-20}
   o_{n}(1)
&=
   \langle (\hat{u}_{n}-\hat{u},\hat{v}_{n}-\hat{v}), (u_{n}-u,v_{n}-v)\rangle
          \nonumber\\
&=  \int_{\mathbb{R}^{N}}\left(\phi_1(|\nabla u_{n}|)\nabla u_{n}-\phi_1(|\nabla u|)\nabla u \right)
    (\nabla u_{n}-\nabla u)dx
 + \int_{\mathbb{R}^{N}}\left(\phi_2(|\nabla v_{n}|)\nabla v_{n}-\phi_2(|\nabla v|)\nabla v\right)
  (\nabla v_{n}-\nabla v)dx
          \nonumber\\
&\quad
 + \int_{\mathbb{R}^{N}}\left(V_{1}(x)-\lambda_{1}\right)
     \left(\phi_1(| u_{n}|)u_{n}-\phi_1(|u|)u\right)(u_{n}-u)dx
 + \int_{\mathbb{R}^{N}}\left(V_{2}(x)-\lambda_{2}\right)
     \left(\phi_2(| v_{n}|)v_{n}-\phi_2(| v|)v\right)(v_{n}-v)dx
          \nonumber\\
&\quad
 + \int_{\mathbb{R}^{N}}(\tilde{u}_{n}-\tilde{u})(u_{n}-u)dx
 + \int_{\mathbb{R}^{N}}(\tilde{v}_{n}-\tilde{v})(v_{n}-v)dx.
\end{align}
{\bf Conclusion.}
\begin{align}
\label{d5-21}&\int_{\mathbb{R}^{N}}(\tilde{u}_{n}-\tilde{u})(u_{n}-u)dx\rightarrow 0\;\text{as}\;n\rightarrow +\infty,\\
\label{d5-22}&\int_{\mathbb{R}^{N}}(\tilde{v}_{n}-\tilde{v})(v_{n}-v)dx\rightarrow 0\;\text{as}\;n\rightarrow +\infty.
\end{align}
In fact, due to H\"{o}lder's inequality
\begin{align*}
\left|\int_{\mathbb{R}^{N}}(\tilde{u}_{n}-\tilde{u})(u_{n}-u)dx\right|
&\leq
\int_{\mathbb{R}^{N}}\left|\tilde{u}_{n}-\tilde{u}\right|\left|u_{n}-u\right|dx
          \nonumber\\
&\leq
\left(\int_{\mathbb{R}^{N}}\left|\tilde{u}_{n}-\tilde{u}\right|^{\frac{p_{1}}{p_{1}-1}}dx\right)^{\frac{p_{1}-1}{p_{1}}}
\left(\int_{\mathbb{R}^{N}}\left|u_{n}-u\right|^{p_{1}}dx\right)^{\frac{1}{p_{1}}}
\end{align*}
On the other hand, from $(H_{2})$,
\begin{align*}
&     \left|\tilde{u}_{n}-\tilde{u}\right|^{\frac{p_{1}}{p_{1}-1}}
\leq \widetilde{C}_0^{\frac{p_{1}}{p_{1}-1}}
     \left(|u_{n}|^{p-1}+|u|^{p-1}+|v_{n}|^{p-1}+|v|^{p-1}\right)^{\frac{p_{1}}{p_{1}-1}}
          \nonumber\\
&\qquad\qquad\quad
\leq \left(2\widetilde{C}_0\right)^{\frac{p_{1}}{p_{1}-1}}
     \left(  |u_{n}|^{\frac{p_{1}(p-1)}{p_{1}-1}}+|u|^{\frac{p_{1}(p-1)}{p_{1}-1}}
            +|v_{n}|^{\frac{p_{1}(p-1)}{p_{1}-1}}+|v|^{\frac{p_{1}(p-1)}{p_{1}-1}}
     \right)
          \nonumber\\
&\qquad\qquad\quad
\leq \left(2\widetilde{C}_0\right)^{\frac{p_{1}}{p_{1}-1}}
     \left(  |u_{n}|^{\frac{l_{1}(p-1)}{p_{1}-1}}+|u_{n}|^{p_{1}}
           + |u|^{\frac{l_{1}(p-1)}{p_{1}-1}}+ |u|^{p_{1}}
           + |v_{n}|^{\frac{l_{2}(p-1)}{p_{1}-1}}+|v_{n}|^{p_{1}}
           + |v|^{\frac{l_{2}(p-1)}{p_{1}-1}}+|v|^{p_{1}}
     \right)
\end{align*}
where $p_{1} \in (\max\{m_{1},m_{2}\},\min\{l_{1}^{\ast},l_{2}^{\ast}\})$ and $p_{1} \geq p$,
and so,
\begin{align*}
&     \int_{\mathbb{R}^{N}}\left|\tilde{u}_{n}-\tilde{u}\right|^{\frac{p_{1}}{p_{1}-1}}dx
\leq \left(2\widetilde{C}_0\right)^{\frac{p_{1}}{p_{1}-1}}
     \int_{\mathbb{R}^{N}}\left(|u_{n}|^{p_{1}}+ |u|^{p_{1}}+|v_{n}|^{p_{1}}+|v|^{p_{1}}\right)dx
          \nonumber\\
&\qquad\qquad\quad\qquad\qquad\quad
   + \left(2\widetilde{C}_0\right)^{\frac{p_{1}}{p_{1}-1}}
     \int_{\mathbb{R}^{N}}
     \left(  |u_{n}|^{\frac{l_{1}(p-1)}{p_{1}-1}}
           + |u|^{\frac{l_{1}(p-1)}{p_{1}-1}}
           + |v_{n}|^{\frac{l_{2}(p-1)}{p_{1}-1}}
           + |v|^{\frac{l_{2}(p-1)}{p_{1}-1}}
     \right)
     dx
          \nonumber\\
&\qquad\qquad\quad\qquad\quad
=   \left(2\widetilde{C}_0\right)^{\frac{p_{1}}{p_{1}-1}}
     \int_{\mathbb{R}^{N}}\left(|u_{n}|^{p_{1}}+ |u|^{p_{1}}+|v_{n}|^{p_{1}}+|v|^{p_{1}}\right)dx
          \nonumber\\
&\qquad\qquad\quad\qquad\qquad\quad
   + \left(2\widetilde{C}_0\right)^{\frac{p_{1}}{p_{1}-1}}
     \int_{\mathbb{R}^{N}}V_{1}(x)^{-\frac{p-1}{p_{1}-1}}V_{1}(x)^{\frac{p-1}{p_{1}-1}}
         |u_{n}|^{\frac{l_{1}(p-1)}{p_{1}-1}}dx
          \nonumber\\
&\qquad\qquad\quad\qquad\qquad\quad
   + \left(2\widetilde{C}_0\right)^{\frac{p_{1}}{p_{1}-1}}
     \int_{\mathbb{R}^{N}}V_{1}(x)^{-\frac{p-1}{p_{1}-1}}V_{1}(x)^{\frac{p-1}{p_{1}-1}}
         |u|^{\frac{l_{1}(p-1)}{p_{1}-1}}dx
          \nonumber\\
&\qquad\qquad\quad\qquad\qquad\quad
   + \left(2\widetilde{C}_0\right)^{\frac{p_{1}}{p_{1}-1}}
     \int_{\mathbb{R}^{N}}V_{2}(x)^{-\frac{p-1}{p_{1}-1}}V_{2}(x)^{\frac{p-1}{p_{1}-1}}
         |v_{n}|^{\frac{l_{2}(p-1)}{p_{1}-1}}dx
          \nonumber\\
&\qquad\qquad\quad\qquad\qquad\quad
   + \left(2\widetilde{C}_0\right)^{\frac{p_{1}}{p_{1}-1}}
     \int_{\mathbb{R}^{N}}V_{2}(x)^{-\frac{p-1}{p_{1}-1}}V_{2}(x)^{\frac{p-1}{p_{1}-1}}
         |v|^{\frac{l_{2}(p-1)}{p_{1}-1}}dx
          \nonumber\\
&\qquad\qquad\quad\qquad\quad
\leq \left(2\widetilde{C}_0\right)^{\frac{p_{1}}{p_{1}-1}}
     \int_{\mathbb{R}^{N}}\left(|u_{n}|^{p_{1}}+ |u|^{p_{1}}+|v_{n}|^{p_{1}}+|v|^{p_{1}}\right)dx
          \nonumber\\
&\qquad\qquad\quad\qquad\qquad\quad
   + \left(2\widetilde{C}_0\right)^{\frac{p_{1}}{p_{1}-1}}
     \left(\int_{\mathbb{R}^{N}}V_{1}(x)^{-\frac{p-1}{p_{1}-p}}dx\right)^{\frac{p_{1}-p}{p_{1}-1}}
     \left(\int_{\mathbb{R}^{N}}V_{1}(x)|u_{n}|^{l_{1}}dx\right)^{\frac{p-1}{p_{1}-1}}
          \nonumber\\
&\qquad\qquad\quad\qquad\qquad\quad
   +   \left(2\widetilde{C}_0\right)^{\frac{p_{1}}{p_{1}-1}}
     \left(\int_{\mathbb{R}^{N}}V_{1}(x)^{-\frac{p-1}{p_{1}-p}}dx\right)^{\frac{p_{1}-p}{p_{1}-1}}
     \left(\int_{\mathbb{R}^{N}}V_{1}(x)|u|^{l_{1}}dx\right)^{\frac{p-1}{p_{1}-1}}
          \nonumber\\
&\qquad\qquad\quad\qquad\qquad\quad
   +   \left(2\widetilde{C}_0\right)^{\frac{p_{1}}{p_{1}-1}}
     \left(\int_{\mathbb{R}^{N}}V_{2}(x)^{-\frac{p-1}{p_{1}-p}}dx\right)^{\frac{p_{1}-p}{p_{1}-1}}
     \left(\int_{\mathbb{R}^{N}}V_{2}(x)|v_{n}|^{l_{2}}dx\right)^{\frac{p-1}{p_{1}-1}}
          \nonumber\\
&\qquad\qquad\quad\qquad\qquad\quad
   + \left(2\widetilde{C}_0\right)^{\frac{p_{1}}{p_{1}-1}}
     \left(\int_{\mathbb{R}^{N}}V_{2}(x)^{-\frac{p-1}{p_{1}-p}}dx\right)^{\frac{p_{1}-p}{p_{1}-1}}
     \left(\int_{\mathbb{R}^{N}}V_{2}(x)|v|^{l_{2}}dx\right)^{\frac{p-1}{p_{1}-1}}
          \nonumber\\
&\qquad\qquad\quad\qquad\quad
\leq \left(2\widetilde{C}_0\right)^{\frac{p_{1}}{p_{1}-1}}
     \int_{\mathbb{R}^{N}}\left(|u_{n}|^{p_{1}}+ |u|^{p_{1}}+|v_{n}|^{p_{1}}+|v|^{p_{1}}\right)dx
          \nonumber\\
&\qquad\qquad\quad\qquad\qquad\quad
   + \left(2\widetilde{C}_0\right)^{\frac{p_{1}}{p_{1}-1}}
     \|V_{1}^{-1}\|_{\frac{p-1}{p_{1}-p}}^{\frac{p-1}{p_{1}-1}}
     \left(\frac{1}{C_{1,5}}\int_{\mathbb{R}^{N}}V_{1}(x)\Phi_{1}\left(|u_{n}|\right)dx\right)^{\frac{p-1}{p_{1}-1}}
          \nonumber\\
&\qquad\qquad\quad\qquad\qquad\quad
   +  \left(2\widetilde{C}_0\right)^{\frac{p_{1}}{p_{1}-1}}
     \|V_{1}^{-1}\|_{\frac{p-1}{p_{1}-p}}^{\frac{p-1}{p_{1}-1}}
     \left(\frac{1}{C_{1,5}}\int_{\mathbb{R}^{N}}V_{1}(x)\Phi_{1}\left(|u|\right)dx\right)^{\frac{p-1}{p_{1}-1}}
          \nonumber\\
&\qquad\qquad\quad\qquad\qquad\quad
   +  \left(2\widetilde{C}_0\right)^{\frac{p_{1}}{p_{1}-1}}
     \|V_{2}^{-1}\|_{\frac{p-1}{p_{1}-p}}^{\frac{p-1}{p_{1}-1}}
     \left(\frac{1}{C_{2,5}}\int_{\mathbb{R}^{N}}V_{2}(x)\Phi_{2}\left(|v_{n}|\right)dx\right)^{\frac{p-1}{p_{1}-1}}
          \nonumber\\
&\qquad\qquad\quad\qquad\qquad\quad
   +  \left(2\widetilde{C}_0\right)^{\frac{p_{1}}{p_{1}-1}}
     \|V_{2}^{-1}\|_{\frac{p-1}{p_{1}-p}}^{\frac{p-1}{p_{1}-1}}
     \left(\frac{1}{C_{2,5}}\int_{\mathbb{R}^{N}}V_{2}(x)\Phi_{2}\left(|v|\right)dx\right)^{\frac{p-1}{p_{1}-1}}
          \nonumber\\
&\qquad\qquad\quad\qquad\quad
\leq \left(2\widetilde{C}_0\right)^{\frac{p_{1}}{p_{1}-1}}
     \left(\|u_{n}\|_{p_{1}}^{p_{1}}+ \|u\|_{p_{1}}^{p_{1}}+\|v_{n}\|_{p_{1}}^{p_{1}}+\|v\|_{p_{1}}^{p_{1}}\right)
          \nonumber\\
&\qquad\qquad\quad\qquad\qquad\quad
   + \left(2\widetilde{C}_0\right)^{\frac{p_{1}}{p_{1}-1}}
     \|V_{1}^{-1}\|_{\frac{p-1}{p_{1}-p}}^{\frac{p-1}{p_{1}-1}}
     \left(\frac{1}{C_{1,5}}\right)^{\frac{p-1}{p_{1}-1}}
     \max\left\{\|u_{n}\|_{\Phi_{1},V_{1}}^{\frac{l_{1}(p-1)}{p_{1}-1}},
                \|u_{n}\|_{\Phi_{1},V_{1}}^{\frac{m_{1}(p-1)}{p_{1}-1}}
         \right\}
          \nonumber\\
&\qquad\qquad\quad\qquad\qquad\quad
   +  \left(2\widetilde{C}_0\right)^{\frac{p_{1}}{p_{1}-1}}
     \|V_{1}^{-1}\|_{\frac{p-1}{p_{1}-p}}^{\frac{p-1}{p_{1}-1}}
     \left(\frac{1}{C_{1,5}}\right)^{\frac{p-1}{p_{1}-1}}
     \max\left\{\|u\|_{\Phi_{1},V_{1}}^{\frac{l_{1}(p-1)}{p_{1}-1}},
                \|u\|_{\Phi_{1},V_{1}}^{\frac{m_{1}(p-1)}{p_{1}-1}}
         \right\}
          \nonumber\\
&\qquad\qquad\quad\qquad\qquad\quad
   +  \left(2\widetilde{C}_0\right)^{\frac{p_{1}}{p_{1}-1}}
     \|V_{2}^{-1}\|_{\frac{p-1}{p_{1}-p}}^{\frac{p-1}{p_{1}-1}}
     \left(\frac{1}{C_{2,5}}\right)^{\frac{p-1}{p_{1}-1}}
     \max\left\{\|v_{n}\|_{\Phi_{2},V_{2}}^{\frac{l_{2}(p-1)}{p_{1}-1}},
                \|v_{n}\|_{\Phi_{2},V_{2}}^{\frac{m_{2}(p-1)}{p_{1}-1}}
         \right\}
          \nonumber\\
&\qquad\qquad\quad\qquad\qquad\quad
   +  \left(2\widetilde{C}_0\right)^{\frac{p_{1}}{p_{1}-1}}
     \|V_{2}^{-1}\|_{\frac{p-1}{p_{1}-p}}^{\frac{p-1}{p_{1}-1}}
     \left(\frac{1}{C_{2,5}}\right)^{\frac{p-1}{p_{1}-1}}
     \max\left\{\|v\|_{\Phi_{2},V_{2}}^{\frac{l_{2}(p-1)}{p_{1}-1}},
                \|v\|_{\Phi_{2},V_{2}}^{\frac{m_{2}(p-1)}{p_{1}-1}}
         \right\}
          \nonumber\\
&\qquad\qquad\quad\qquad\quad
:=\Pi
\end{align*}
Therefore,
\begin{align*}
\left|\int_{\mathbb{R}^{N}}(\tilde{u}_{n}-\tilde{u})(u_{n}-u)dx\right|
\leq
\left(\int_{\mathbb{R}^{N}}\left|\tilde{u}_{n}-\tilde{u}\right|^{\frac{p_{1}}{p_{1}-1}}dx\right)^{\frac{p_{1}-1}{p_{1}}}
\left(\int_{\mathbb{R}^{N}}\left|u_{n}-u\right|^{p_{1}}dx\right)^{\frac{1}{p_{1}}}
\leq
\Pi^{\frac{p_{1}-1}{p_{1}}}
\|u_{n}-u\|_{p_{1}}
\rightarrow 0.
\end{align*}
Now, we define the operators
 $\mathcal{F}: W_{1}\rightarrow(W_{1})^*$ by
 $$
 \langle\mathcal{F}(u),\tilde{u}\rangle
 := \int_{\mathbb{R}^N}\phi_1(|\nabla u|)\nabla u\nabla \breve{u}dx
  + \int_{\mathbb{R}^N}V_{1}(x)\phi_1(| u|) u \breve{u}dx, \;\;
 u, \breve{u} \in W_{1}
 $$
 and $\mathcal{G}: W_{2}\rightarrow(W_{2})^*$
 by
 $$
     \langle\mathcal{G}(v),\tilde{v}\rangle
 :=   \int_{\mathbb{R}^N}\phi_2(|\nabla v|)\nabla v\nabla \breve{v}dx
    + \int_{\mathbb{R}^N}V_{2}(x)\phi_2(| v|) v \breve{v}dx,
            \;\;v, \breve{v} \in W_{2}.
 $$
Then, $(V)$ and $(\phi_2)$ imply that operators $\mathcal{F}$ and $\mathcal{G}$ are strictly monotone.
So, it follows by  \eqref{d5-20},  \eqref{d5-21} and \eqref{d5-22} that
\begin{align*}
\int_{\mathbb{R}^{N}}\left(\phi_1(|\nabla u_{n}|)\nabla u_{n}-\phi_1(|\nabla u|)\nabla u \right)
    (\nabla u_{n}-\nabla u)dx
\rightarrow 0,
\end{align*}
\begin{align*}
\int_{\mathbb{R}^{N}}\left(\phi_2(|\nabla v_{n}|)\nabla v_{n}-\phi_2(|\nabla v|)\nabla v\right)
  (\nabla v_{n}-\nabla v)dx
\rightarrow 0,
\end{align*}
\begin{align*}
\int_{\mathbb{R}^{N}}\left(V_{1}(x)-\lambda_{1}\right)
     \left(\phi_1(| u_{n}|)u_{n}-\phi_1(|u|)u\right)(u_{n}-u)dx
\rightarrow 0,
\end{align*}
and
\begin{align*}
\int_{\mathbb{R}^{N}}\left(V_{2}(x)-\lambda_{2}\right)
     \left(\phi_2(| v_{n}|)v_{n}-\phi_2(| v|)v\right)(v_{n}-v)dx
\rightarrow 0,
\end{align*}
as $n\rightarrow \infty$.
The next discusses is similar to Lemma 3.9 in \cite{wang2017-4},
and its proof follows the same steps, then we will omit its proof.
Hence, $(u_{n},v_{n})\rightarrow (u,v)$ in $W$.
\par
\noindent

 Since $W$ is a reflexive and separable Banach spaces,  there exist two sequences  $\{e_{ij}: j\in \mathbb{N}\}\subset {W_{i}}~(i=1,2)$ and $\{e_{ij}^{*}: j\in {\mathbb{N}\}\subset {W_{i}}}^{*}~(i=1,2)$ such that
 \begin{equation*}\label{3.2.1}
 W_{i}=\overline{\mbox {span}\{e_{ij}: j=1,2,\cdots\}}, \quad {W_{i}}^{*}=\overline{\mbox{span}\{e_{ij}^{*}: j=1,2,\cdots\}},\quad i=1,2,
 \end{equation*}
 and
 \begin{equation*}\label{3.2.2}
 e^{*}_{in}(e_{im})=
 \begin{cases}
  \begin{array}{ll}
 1 &\mbox{if } n=m,\\
 0 &\mbox{if } n\neq m,\\
    \end{array}
 \end{cases}i=1,2.
 \end{equation*}
 Let $Y_{i(k)}$ and $Z_{i(k)}$ be the subsets of $W_{i}$ defined by
 \begin{equation*}\label{3.2.3}
 Y_{i(k)}:=\mbox{span}\{e_{ij}: j=1,\cdots,k\}, \quad Z_{i(k)}:=\overline{\mbox{span}\{e_{ij}: j=k+1,\cdots\}}, \quad i=1,2.
 \end{equation*}
  Then
 $$W_{i}=Y_{i(k)}\oplus Z_{i(k)},\quad i=1,2, \quad k\in\mathbb{N}.$$

In addition, for Banach space $W=W_{1}\times W_{2}$, there exists a sequence $\{\eta_{(j)}\}\subset W$ defined by
 \begin{equation*}\label{3.2.6}
 \eta_{(j)}=
 \begin{cases}
  \begin{array}{ll}
 (e_{1n},0) &\mbox{if }j=2n-1,\\
 (0,e_{2n}) &\mbox{if }j=2n,  \quad      \mbox{ for } n\in \mathbb{N},\\
    \end{array}
 \end{cases}
 \end{equation*}
 such that
 \begin{itemize}
 	\item[$\rm(1)$]
 $$W=\overline{\mbox{span}\{\eta_{(j)}: j=1,2,\cdots\}},$$
 \item[$\rm(2)$]
 $$W=Y_k\oplus Z_k,$$
 where
 $$Y_k:=\text{span}\{\eta_{(j)}: j=1,\cdots,k\} \quad \text{and} \quad Z_k:=\overline{\mbox{span}\{\eta_{(j)}: j=k+1,\cdots\}}.$$
 \end{itemize}
\par
\noindent
\begin{lemma}\label{lemma3.10}
Assume that $(\phi_1)$-$(\phi_4)$, $(H_0)$-$(H_4)$, $(F_{0})$ and $(V_0)$ hold. There exists $\varepsilon=\min\left\{\varepsilon_{1},\varepsilon_{2}\right\}>0$, such that $\|f\|_{q}<\varepsilon$. Then there exists $M:=(M_{1},M_{2})>0$,
such that $\alpha:=\max\limits_{x\in S_{M}\cap Y_{2k}}I(x)<\beta:=\inf\limits_{x\in Z_{2k}}I(x)$.
\end{lemma}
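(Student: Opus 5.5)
The plan is to bound $I$ from below on $Z_{2k}$ and from above on the sphere $S_M\cap Y_{2k}$. The two bounds are then separated by choosing the radius $M$ and the smallness $\varepsilon$ of $\|f\|_q$. The lower bound comes from the estimate \eqref{d5-19} already used in Lemma \ref{t5-10}.

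\textbf{Lower bound on $Z_{2k}$.} For $(u,v)\in W$, in particular in $Z_{2k}$, drop the nonnegative terms $\widehat C_1\|u\|_p^p+\widehat C_1\|v\|_p^p$ in \eqref{d5-19}. This gives
\begin{align*}
I(u,v)\ge \sum_{i=1}^{2}\Big(a_i\,\zeta_{0,i}(\|w_i\|_{1,\Phi_i})-\|f\|_qC_{i,6}\|w_i\|_{1,\Phi_i}\Big),\qquad (w_1,w_2)=(u,v),
\end{align*}
where $a_i=2^{-m_i}\min\{1,1-\lambda_i/C_4\}>0$ and $\zeta_{0,i}(t)=\min\{t^{l_i},t^{m_i}\}$. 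Each one-variable function $g_i(t)=a_i\zeta_{0,i}(t)-\|f\|_qC_{i,6}t$ has infimum over $t\ge0$ equal to $-c_i\|f\|_q^{\,l_i'}$ (or $-c_i\|f\|_q^{\,m_i'}$), where $c_i$ depends only on $a_i$, $C_{i,6}$, $l_i$, $m_i$. Hence
\[
\beta\ge -c_1\|f\|_q^{\,\theta_1}-c_2\|f\|_q^{\,\theta_2}=:-\gamma(\|f\|_q),
\]
with $\theta_i>1$, so $\gamma(\|f\|_q)\to0$ as $\|f\|_q\to0$.

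\textbf{Upper bound on the sphere.} On the finite-dimensional space $Y_{2k}$ all norms are equivalent. In particular there is $d_k>0$ with $\|u\|_p+\|v\|_p\ge d_k\|(u,v)\|$. For $(u,v)\in S_M\cap Y_{2k}$, use \eqref{d5-6}, $V_i-\lambda_i\le V_i$, Lemma \ref{lemma2.8.1} and Lemma \ref{lemma2.7} (with the norm comparison of Lemma \ref{lemma2.8}) to get
\[
I(u,v)\le \sum_i\zeta_{1,i}(\|w_i\|_{1,\Phi_i})+\widehat C_0(\|u\|_p^p+\|v\|_p^p)+\|f\|_q(\|u\|_p+\|v\|_p).
\]
This alone does not help, since it only gives a positive upper bound. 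The useful estimate instead uses that $\lambda_i$ is large: by $(V_0)$, $\lambda_i>8/(C_{i,5}h_{i,3}^{l_i})$. Hence the term $-\lambda_i\int\Phi_i(|w_i|)\le-\lambda_iC_{i,5}\int|w_i|^{l_i}$ (Remark \ref{t5-3}), together with the embedding constant $h_{i,3}$, beats $\int\Phi_i(|\nabla w_i|)+\int V_i\Phi_i(|w_i|)$ for small $M=(M_1,M_2)$ where $\|w_i\|\le1$.

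That gives $\alpha\le-\sum_i\kappa_iM_i^{l_i}+\widehat C_0C_kM^p+\|f\|_qC_kM$ with $\kappa_i>0$. Since $p>\max\{l_1,l_2\}$, choosing $M_i$ small makes the first term dominate, so $\alpha\le-\tfrac12\sum_i\kappa_iM_i^{l_i}+\|f\|_qC_kM$.

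\textbf{Choice of $\varepsilon$.} Fix $M$ as above. Then choose $\varepsilon_1,\varepsilon_2$ so that $\|f\|_q<\varepsilon$ implies
\[
\|f\|_qC_kM+\gamma(\|f\|_q)<\tfrac12\sum_i\kappa_iM_i^{l_i}.
\]
This yields $\alpha<\beta$.

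\textbf{Main obstacle.} The delicate step is the upper bound. Its difficulty is controlling $\int V_i\Phi_i$ by $\lambda_i\int\Phi_i$ on $Y_{2k}$. I would try to use the precise form of $(V_0)$ to compare these via the embedding constant $h_{i,3}$, rather than relying on finite-dimensional equivalence, which loses the explicit constant $8$.
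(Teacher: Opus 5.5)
Your proposal is correct and follows essentially the paper's route. The lower bound for $\beta$ comes from \eqref{d5-19} with the $H$-term dropped and $\bar d_i\min\{t^{l_i},t^{m_i}\}-\|f\|_qC_{i,6}t$ minimized over $t\ge0$; the upper bound on the sphere in $Y_{2k}$ comes from $-\lambda_i\int_{\mathbb{R}^N}\Phi_i(|w_i|)dx\le-\lambda_iC_{i,5}h_{i,3}^{l_i}\|w_i\|_{1,\Phi_i}^{l_i}$ together with $(V_0)$ and $p>l_i$; and finally $\|f\|_q$ is taken small. Taking $M_i\le1$ small, instead of the paper's explicit minimizer of $e_{i1}$, is a harmless and in fact cleaner variant, because it keeps all modular estimates in the regime where only the power $l_i$ appears.

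The one thing to correct is your closing remark. $h_{i,3}$ is precisely the finite-dimensional norm-equivalence constant of \eqref{3.2.5-} on $Y_{i(k)}$, since no inequality $\|w\|_{L^{l_i}}\ge h\|w\|_{1,\Phi_i}$ can hold on all of $W_i$. So the finite-dimensional argument you actually use is the only one available, not something to avoid.
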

\par
\noindent
{\bf Proof.}
For any $(u,v)\in Z_{2k}$ with $\|u\|_{1,\Phi_1}=r_{1,k}$ and $\|v\|_{1,\Phi_2}=r_{2,k}$ we have
\begin{align}\label{3.1.18}
&I(u,v)
=        \int_{\mathbb{R}^{N}}\Phi_1(|\nabla u|)dx
          + \int_{\mathbb{R}^{N}}\Phi_2(|\nabla v|)dx
          + \int_{\mathbb{R}^{N}}V_{1}(x)\Phi_1(| u|)dx
          + \int_{\mathbb{R}^{N}}V_{2}(x)\Phi_2(| v|)dx
          \nonumber\\
&~~~~
          - \lambda_{1}\int_{\mathbb{R}^{N}}\Phi_1(| u|)dx
          - \lambda_{2}\int_{\mathbb{R}^{N}}\Phi_2(| v|)dx
          + \int_{\mathbb{R}^{N}}H(x,u,v)dx
          - \int_{\mathbb{R}^{N}}f(x)(u+v)dx
          \nonumber\\
&\geq        \int_{\mathbb{R}^{N}}\Phi_1(|\nabla u|)dx
          + \int_{\mathbb{R}^{N}}\Phi_2(|\nabla v|)dx
          + \int_{\mathbb{R}^{N}}V_{1}(x)\Phi_1(| u|)dx
          + \int_{\mathbb{R}^{N}}V_{2}(x)\Phi_2(| v|)dx
          \nonumber\\
&~~~~
          - \frac{\lambda_{1}}{C_{4}}\int_{\mathbb{R}^{N}}V_{1}(x)\Phi_1(|u|)dx
          - \frac{\lambda_{2}}{C_{4}}\int_{\mathbb{R}^{N}}V_{2}(x)\Phi_2(|v|)dx
          + \widehat{C}_1\int_{\mathbb{R}^{N}}|u|^{p}dx
          + \widehat{C}_1\int_{\mathbb{R}^{N}}|v|^{p}dx
          \nonumber\\
&~~~~
          -\left(\int_{\mathbb{R}^{N}}|f(x)|^{q}dx\right)^{\frac{1}{q}}
           \left(\int_{\mathbb{R}^{N}}|u|^{q}dx\right)^{\frac{1}{q}}
          - \left(\int_{\mathbb{R}^{N}}|f(x)|^{q}dx\right)^{\frac{1}{q}}
           \left(\int_{\mathbb{R}^{N}}|v|^{q}dx\right)^{\frac{1}{q}}
          \nonumber\\
&\geq        \min\left\{1,1-\frac{\lambda_{1}}{C_{4}}\right\}
             \min\left\{\|u\|_{3,\Phi_1}^{l_{1}},\|u\|_{3,\Phi_1}^{m_{1}}\right\}
          - \|f\|_{q}\|u\|_{q}
         + \widehat{C}_1\|u\|_{p}^{p}
          \nonumber\\
&~~~~
          +\min\left\{1,1-\frac{\lambda_{2}}{C_{4}}\right\}
              \min\left\{\|v\|_{3,\Phi_2}^{l_{2}},\|v\|_{3,\Phi_2}^{m_{2}}\right\}
          - \|f\|_{q}\|v\|_{q}
         + \widehat{C}_1\|v\|_{p}^{p}
          \nonumber\\
&\geq
       \frac{\min\left\{1,1-\frac{\lambda_{1}}{C_{4}}\right\}}{2^{m_{1}}}
       \min\left\{\|u\|_{1,\Phi_1}^{l_{1}},\|u\|_{1,\Phi_1}^{m_{1}}\right\}
          - \|f\|_{q}C_{1,6}\|u\|_{1,\Phi_1}
          \nonumber\\
&~~~~
+  \frac{\min\left\{1,1-\frac{\lambda_{2}}{C_{4}}\right\}}{2^{m_{2}}}
   \min\left\{\|v\|_{1,\Phi_2}^{l_{2}},\|v\|_{1,\Phi_2}^{m_{2}}\right\}
          - \|f\|_{q}C_{2,6}\|v\|_{1,\Phi_2}
          \nonumber\\
&\geq
       \min\left\{   \bar{d_{1}}\left(\frac{\|f\|_{q}C_{1,6}}{\bar{d_{1}}l_{1}}\right)^{\frac{l_{1}}{l_{1}-1}}
                   - \|f\|_{q}C_{1,6}\frac{\|f\|_{q}C_{1,6}}{\bar{d_{1}}l_{1}},
                     \bar{d_{1}}\left(\frac{\|f\|_{q}C_{1,6}}{\bar{d_{1}}m_{1}}\right)^{\frac{m_{1}}{m_{1}-1}}
                   - \|f\|_{q}C_{1,6}\frac{\|f\|_{q}C_{1,6}}{\bar{d_{1}}m_{1}}
           \right\}
          \nonumber\\
&~~~~
+       \min\left\{   \bar{d_{2}}\left(\frac{\|f\|_{q}C_{2,6}}{\bar{d_{2}}l_{2}}\right)^{\frac{l_{2}}{l_{2}-1}}
                   - \|f\|_{q}C_{2,6}\frac{\|f\|_{q}C_{2,6}}{\bar{d_{2}}l_{2}},
                     \bar{d_{2}}\left(\frac{\|f\|_{q}C_{2,6}}{\bar{d_{2}}m_{2}}\right)^{\frac{m_{2}}{m_{2}-1}}
                   - \|f\|_{q}C_{2,6}\frac{\|f\|_{q}C_{2,6}}{\bar{d_{2}}m_{2}}
           \right\},
\end{align}
In fact, let $\bar{d_{1}}:=\frac{\min\left\{1,1-\frac{\lambda_{1}}{C_{4}}\right\}}{2^{m_{1}}}$, we have
 $\bar{d_{2}}:=\frac{\min\left\{1,1-\frac{\lambda_{2}}{C_{4}}\right\}}{2^{m_{2}}}$
\begin{align}
      \bar{d_{1}}\|u\|_{1,\Phi_1}^{l_{1}}- \|f\|_{q}C_{1,6}\|u\|_{1,\Phi_1}
\geq
      \bar{d_{1}}\left(\frac{\|f\|_{q}C_{1,6}}{\bar{d_{1}}l_{1}}\right)^{\frac{l_{1}}{l_{1}-1}}
    - \|f\|_{q}C_{1,6}\frac{\|f\|_{q}C_{1,6}}{\bar{d_{1}}l_{1}},\\
      \bar{d_{1}}\|u\|_{1,\Phi_1}^{m_{1}}- \|f\|_{q}C_{1,6}\|u\|_{1,\Phi_1}
\geq
      \bar{d_{1}}\left(\frac{\|f\|_{q}C_{1,6}}{\bar{d_{1}}m_{1}}\right)^{\frac{m_{1}}{m_{1}-1}}
    - \|f\|_{q}C_{1,6}\frac{\|f\|_{q}C_{1,6}}{\bar{d_{1}}m_{1}},\\
      \bar{d_{2}}\|v\|_{1,\Phi_2}^{l_{2}}- \|f\|_{q}C_{2,6}\|v\|_{1,\Phi_2}
\geq
      \bar{d_{2}}\left(\frac{\|f\|_{q}C_{2,6}}{\bar{d_{2}}l_{2}}\right)^{\frac{l_{2}}{l_{2}-1}}
    - \|f\|_{q}C_{2,6}\frac{\|f\|_{q}C_{2,6}}{\bar{d_{2}}l_{2}},\\
      \bar{d_{2}}\|v\|_{1,\Phi_2}^{m_{2}}- \|f\|_{q}C_{2,6}\|v\|_{1,\Phi_2}
\geq
      \bar{d_{2}}\left(\frac{\|f\|_{q}C_{2,6}}{\bar{d_{2}}m_{2}}\right)^{\frac{m_{2}}{m_{2}-1}}
    - \|f\|_{q}C_{2,6}\frac{\|f\|_{q}C_{2,6}}{\bar{d_{2}}m_{2}},
\end{align}
On the other hand,
since any two norms in finite dimensional space are equivalent,  there exist positive constants $h_{1,1}$, $h_{1,2}$, $h_{2,1}$, $h_{2,2}$, $h_{1,3}$, $h_{1,4}$, $h_{2,3}$ and $h_{2,4}$  such that
 \begin{align}
 &h_{1,1}\|u\|_{1,\Phi_1}\leq\|u\|_{L^{p}}\leq h_{1,2}\|u\|_{1,\Phi_1},\quad \forall  u\in W_{1},\label{3.2.5}\\
 &h_{2,1}\|v\|_{1,\Phi_2}\leq\|v\|_{L^{p}}\leq h_{2,2}\|v\|_{1,\Phi_2},\quad \forall  v\in W_{2},\label{3.2.6}\\
  &h_{1,3}\|u\|_{1,\Phi_1}\leq\|u\|_{L^{l_{1}}}\leq h_{1,4}\|u\|_{1,\Phi_1},\quad \forall  u\in W_{1},\label{3.2.5-}\\
 &h_{2,3}\|v\|_{1,\Phi_2}\leq\|v\|_{L^{l_{2}}}\leq h_{2,4}\|v\|_{1,\Phi_2},\quad \forall  v\in W_{2},\label{3.2.6-}
 \end{align}
 where $h_{1,1}$, $h_{1,2}$, $h_{1,3}$, $h_{1,4}$ and $h_{2,1}$, $h_{2,2}$, $h_{2,3}$, $h_{2,4}$ depend on the spatial dimension of $W_{1}$ and $W_{2}$, respectively.
For any $(u,v)\in Y_{2k}$ with $\|u\|_{1,\Phi_1}=r_{1,k}$ and $\|v\|_{1,\Phi_2}=r_{2,k}$ we have
\begin{align}\label{3.1.21}
I(u,v)
&=        \int_{\mathbb{R}^{N}}\Phi_1(|\nabla u|)dx
          + \int_{\mathbb{R}^{N}}\Phi_2(|\nabla v|)dx
          + \int_{\mathbb{R}^{N}}V_{1}(x)\Phi_1(|u|)dx
          + \int_{\mathbb{R}^{N}}V_{2}(x)\Phi_2(|v|)dx
          \nonumber\\
&~~~~
          - \lambda_{1}\int_{\mathbb{R}^{N}}\Phi_1(|u|)dx
          - \lambda_{2}\int_{\mathbb{R}^{N}}\Phi_2(|v|)dx
          + \int_{\mathbb{R}^{N}}H(x,u,v)dx
          - \int_{\mathbb{R}^{N}}f(x)(u+v)dx
          \nonumber\\
&\leq        \max\left\{\|u\|_{3,\Phi_1}^{l_{1}},\|u\|_{3,\Phi_1}^{m_{1}}\right\}
         +  \widehat{C}_0\|u\|_{L^{p}}^{p}
         -  \lambda_{1} C_{1,5}\|u\|_{l_{1}}^{l_{1}}
         +\|f\|_{q}C_{1,6}\|u\|_{1,\Phi_1}
          \nonumber\\
&~~~~
         +  \max\left\{\|v\|_{3,\Phi_2}^{l_{2}},\|v\|_{3,\Phi_2}^{m_{2}}\right\}
         +  \widehat{C}_0\|v\|_{L^{p}}^{p}
         - \lambda_{2}C_{2,5}\|v\|_{l_{2}}^{l_{2}}
         +\|f\|_{q}C_{2,6}\|v\|_{1,\Phi_2}
          \nonumber\\
&\leq
           4\|u\|_{1,\Phi_1}^{l_{1}}
           +4\|u\|_{1,\Phi_1}^{m_{1}}
         +  \widehat{C}_0h_{1,2}^{p}\|u\|_{1,\Phi_1}^{p}
         -  \lambda_{1} C_{1,5}h_{1,3}^{l_{1}}\|u\|_{1,\Phi_1}^{l_{1}}
         +\|f\|_{q}C_{1,6}\|u\|_{1,\Phi_1}
          \nonumber\\
&~~~~
         +  4\|v\|_{1,\Phi_2}^{l_{2}}
         +4\|v\|_{1,\Phi_2}^{m_{2}}
         +  \widehat{C}_0h_{2,2}^{p}\|v\|_{1,\Phi_2}^{p}
         - \lambda_{2}C_{2,5}h_{2,3}^{l_{2}}\|v\|_{1,\Phi_2}^{l_{2}}
         +\|f\|_{q}C_{2,6}\|v\|_{1,\Phi_2}
          \nonumber\\
&\leq
\left(8-\lambda_{1} C_{1,5}h_{1,3}^{l_{1}}\right)
\|u\|_{1,\Phi_1}^{l_{1}}
+
\left(8-\lambda_{2}C_{2,5}h_{2,3}^{l_{2}}\right)
\|v\|_{1,\Phi_2}^{l_{2}}
          \nonumber\\
&~~~~
+ \left(4+ \widehat{C}_0h_{1,2}^{p}\right)\|u\|_{1,\Phi_1}^{p}
+\|f\|_{q}C_{1,6}\|u\|_{1,\Phi_1}
+ \left(4+ \widehat{C}_0h_{2,2}^{p}\right)\|v\|_{1,\Phi_2}^{p}
+\|f\|_{q}C_{2,6}\|v\|_{1,\Phi_2}
\end{align}
For $s>0$, set
\begin{align*}
     e_{11}(s)
:=
     \left(8-\lambda_{1} C_{1,5}h_{1,3}^{l_{1}}\right)s^{l_{1}}
   + \left(4+ \widehat{C}_0h_{1,2}^{p}\right)s^{p}
\end{align*}
and
\begin{align*}
     e_{12}(s)
:=
-\|f\|_{q}C_{1,6}s
+\min\left\{   \bar{d_{1}}\left(\frac{\|f\|_{q}C_{1,6}}{\bar{d_{1}}l_{1}}\right)^{\frac{l_{1}}{l_{1}-1}}
                   - \|f\|_{q}C_{1,6}\frac{\|f\|_{q}C_{1,6}}{\bar{d_{1}}l_{1}},
                     \bar{d_{1}}\left(\frac{\|f\|_{q}C_{1,6}}{\bar{d_{1}}m_{1}}\right)^{\frac{m_{1}}{m_{1}-1}}
                   - \|f\|_{q}C_{1,6}\frac{\|f\|_{q}C_{1,6}}{\bar{d_{1}}m_{1}}
           \right\}.
\end{align*}
Since $8-\lambda_{1} C_{1,5}h_{1,3}^{l_{1}}<0$ and $p>l_{1}$, $e_{11}(s)$ has a negative minimum at
$M_{1}:=\left(\frac{l_{1}(\lambda_{1} C_{1,5}h_{1,3}^{l_{1}}-8)}{p\left(4+ \widehat{C}_0h_{1,2}^{p}\right)}\right)^{\frac{1}{p-l_{1}}}$.
Moreover, notice that as $\|f\|_{q}\rightarrow 0$, $e_{12}(M_{1})\rightarrow 0$. Hence, there exists $\varepsilon_{1}>0$ such that as $\|f\|_{q}<\varepsilon_{1}$, one has $e_{11}(M_{1})< e_{12}(M_{1})$.
Similarly,  there exists $\varepsilon_{2}>0$ such that as $\|f\|_{q}<\varepsilon_{2}$, one has $e_{21}(M_{2})< e_{22}(M_{2})$.
Therefore, there exists $\varepsilon=\min\left\{\varepsilon_{1},\varepsilon_{2}\right\}>0$ such that as $\|f\|_{q}<\varepsilon$, one has $e_{11}(M_{1})+e_{21}(M_{2})<e_{12}(M_{1})+ e_{22}(M_{2})$.
The proof  is completed.

 \vskip2mm
\noindent{\bf Acknowledgements}
\par
This project is supported by Interdisciplinary Research Program of Kunming University of Science and Technology (No: KUST-xk202025006),  Natural Science Research Fund of Kunming University of Science and Technology (No: KKZ3202507048), and  Yunnan Fundamental Research Projects (grant No:202301AT070465).

 \vskip2mm
\noindent{\bf Conflict of Interest}
\par
The authors state no conflict of  interest.

\end{document}